\newtheorem{propos}{Proposition}
\theoremstyle{definition}
\newtheorem{remark}{Remark} 
\begin{document}
\titlerunning{Balayage of Measures on the Complex Plane \dots}
\authorrunning{B. N. Khabibullin, E. B. Menshikova}

\title{Balayage of Measures on the Complex Plane with respect to Harmonic Polynomials and Logarithmic Kernels}
%%%%\subtitle{If so, write it here}
%%%\titlerunning{Short form of title}        % if too long for running head

\author{\firstname{B. N.}~\surname{Khabibullin}}
\email[E-mail: ]{khabib-bulat@mail.ru } 
\affiliation{Bashkir State University, Bashkortostan, Russian Federation}

\author{\firstname{E. B.}~\surname{Menshikova}}
\email[E-mail: ]{algeom@bsu.bashedu.ru } 
\affiliation{Bashkir State University, Bashkortostan, Russian Federation}

%%\firstcollaboration{(Submitted by A.M. Elizarov)}

\received{August 03, 2020}
\keywords%%%%{Keywords and phrases}:
{\it  balayage of measure,  polynomial, logarithmic potential, subharmonic function, Riesz measure, Weierstrass\,--\,Hadamard representation, polar set}

\maketitle

\section{Balayage of measures}

As usual, $\mathbb N:=\{1,2, \dots\}$ is  set of all {\it natural\/} numbers. 
We denote singleton sets by a symbol without curly brackets. So,  ${\mathbb{N}}_0:=0\cup {\mathbb{N}}:=\{0\}\cup {\mathbb{N}}$. $\mathbb R$  is the \textit{real line,\/} or the {\it real axis\/} of the complex plane $\mathbb C$, with the standard {\it Euclidean norm-module\/} $|\cdot |$ and order $\leq$, ${\mathbb{R}}^+:=\{r\in {\mathbb{R}}\colon 0\leq r\}$ is the {\it positive closed semiaxis,\/} and ${\mathbb{R}}^+\!\setminus\!0$ is the {\it positive open semiaxis.\/}
The {\it extended real line\/}  $\overline {\mathbb{R}}:={\mathbb{R}}\cup \pm\infty$ is the order completion of ${\mathbb{R}}$ by the  union   with $+\infty:=\sup {\mathbb{R}}$ and $-\infty:=\inf {\mathbb{R}}$, and  $\inf \varnothing :=+\infty$, $\sup \varnothing :=-\infty$ for the {\it empty set\/} $\varnothing$ etc. Besides, $\overline {\mathbb{R}}^+:={\mathbb{R}}^+\cup+\infty$, 
$x\cdot (\pm\infty):=\pm\infty=:(-x)\cdot (\mp\infty)$ for $x\in \overline {\mathbb{R}}^+\!\setminus\!0$,  
but $0\cdot (\pm\infty):=0$ unless otherwise specified. 

Denote by ${\mathbb{C}}_{\infty}:={\mathbb{C}}\cup \{\infty\}$ the 
{\it  Alexandroff\/} one-point {\it compactification\/} of  the {\it complex plane\/} ${\mathbb{C}}$
with the standard {\it Euclidean norm-module\/} $|\cdot|$, but $|\infty|:=+\infty$. 
For $S\subset {\mathbb{C}}_{\infty}$  we let $\complement S :={\mathbb{C}}_{\infty}\!\setminus\!S$, ${\rm clos\,} S$, ${\rm int\,} S:=\complement ({\rm clos\,} \complement  S)$, and $\partial S:={\rm clos\,} S\!\setminus\!{\rm int\,} S$ denote its
{\it complement,\/} {\it closure,} {\it interior,} and {\it boundary\/}  always in ${\mathbb{C}}_{\infty}$, and $S$ is equipped with the topology induced from ${\mathbb{C}}_{\infty}$. If ${\rm clos\,} S'\subset S$, then we write   $S'\Subset S$.

 Let $X,Y$ are sets. We denote by $Y^X$ the set of all functions  $f\colon X\to Y$.
 
Let ${\text{\sf Bor}} (S)$ be the class of all Borel subsets in $S\subset {\mathbb{C}}_{\infty}$. We denote by ${\sf Meas}(S)$  the class of all Borel signed measures, or, \textit{charges\/} on a set $S\in {\text{\sf Bor}}  ({\mathbb{C}}_{\infty})$, and    
${\sf Meas}_{{\rm cmp}}(S)$ is the class of charges $\mu \in {\sf Meas}(S)$ with a compact {\it support\/} ${\rm supp\,}  \mu \Subset S$; 
${\sf Meas}^+(S):=\{ \mu\in  {\sf Meas} (S)\colon \mu\geq 0\}$ is the subclass of all positive {\it measures};
${\sf Meas}_{{\rm cmp}}^+(S):= {\sf Meas}_{{\rm cmp}} (S)\cap {\sf Meas}^+(S)$;
${\sf Meas}^{1+}(S):=\{\mu \in {\sf Meas}^+(S)\colon \mu(S)=1 \}$ 
is the class of {\it probability measures\/} on ${\text{\sf Bor}} (S)$.  
Besides, ${\sf Meas}:={\sf Meas}({\mathbb{C}})$, ${\sf Meas}_{{\rm cmp}}:={\sf Meas}_{{\rm cmp}}({\mathbb{C}})$, ${\sf Meas}^+:={\sf Meas}({\mathbb{C}})$,  etc. For $S\in \text{\sf Bor}(\mathbb C)$ and a charge $\nu \in {\sf Meas}(S)$, we let
$\nu^+\in {\sf Meas}^+(S)$, $\nu^-:=(-\nu)^+\in {\sf Meas}^+(S)$,
$|\nu| := \nu^+ +\nu^-\in {\sf Meas}^+(S)$ respectively denote {\it upper, lower,\/} and {\it total variations\/} of $\nu$.

\begin{definition}[{\rm see  \cite{Landkoff},  \cite{KhaRozKha19}, 
\cite{Gamelin}, \cite{Kha03}, cf. \cite[Definition 2]{Kha20}}]\label{Def2}  For  $\delta, \omega \in {\sf Meas}^+$ and
$H\subset {\overline {\mathbb{R}}}^{{\mathbb{C}}}$, let us assume that the integrals 
$\int h {\,{\rm d}}{\delta}$ and  $\int h {\,{\rm d}}{\omega}$ are well defined with values in ${\overline {\mathbb{R}}}$ for each function $h\in H$. We say that the measure  $\omega$ is a {\it balayage,\/} or, sweeping (out), of the measure ${\delta}$ {\it with respect to\/} $H$, or, briefly,  $\omega$  is a {\it $H$-balayage of ${\delta}$,\/} if 
\begin{equation}\label{balnumu}
	\int h {\,{\rm d}} {\delta} \leq \int h{\,{\rm d}} \omega \in \overline {\mathbb{R}} \quad\text{\it for each\/ $h\in H$.}
	\end{equation} 
	Obviously, \eqref{balnumu} implies the equality
	\begin{equation}\label{balnumu=}
	\int h {\,{\rm d}} {\delta} =\int h{\,{\rm d}} \omega \in {\mathbb{R}} \quad\text{\it if\/ $\pm h\in H$
		and } \int h{\,{\rm d}} \delta \neq -\infty. 
	\end{equation} 
\end{definition}

For  $S \subset \mathbb C$, by ${\sf har} (S)$ and ${\sf sbh}(S)$
we denote the classes of all {\it harmonic\/}  and    {\it subharmonic\/}  
functions on open neighborhoods  of $S$, respectively; ${\sf sbh}_*(S):=\{u\in {\sf sbh}(S)\colon u\not\equiv -\infty\}$. 
Widely used variants of classes   $H\subset  {\overline {\mathbb{R}}}^{{\mathbb{C}}}$ from Definition \ref{Def2}  are
${\sf har}:={\sf har}({\mathbb{C}} )$,  ${\sf sbh}:={\sf sbh}({\mathbb{C}} )$, ${\sf sbh}_*:={\sf sbh}_*({\mathbb{C}})$ \cite{Gamelin}, \cite{KhaRozKha19}.   
For an open set $O\subset {\mathbb{C}}$, the {\it  Riesz measure of\/} $u\in {\sf sbh}_*( O)$ is a positive Borel measure 
\begin{equation}\label{df:cm}
\varDelta_u:= \frac{1}{2\pi} {\bigtriangleup}{u}\in {\sf Meas}^+(  O),
\end{equation}
where $\bigtriangleup$ is  the {\it Laplace operator\/}  acting in the sense of the
theory of distributions or generalized functions.

For  $p\in \overline {\mathbb{R}}$ and ${\mathbb{Z}}:=(-{\mathbb{N}})\cup {\mathbb{N}}_0$, we set 
\begin{subequations}\label{fc}
\begin{align}
\lfloor p \rfloor&:=:
\text{floor} (p):=  \sup\{k\in {\mathbb{Z}}\colon  k\leq p\} \in {\mathbb{Z}}\cup \pm\infty 
\tag{\ref{fc}f}\label{{fc}f}\\
\intertext{(the {\it integer part\/} of $p$ if $p\in {\mathbb{R}}$), and} 
\lceil p \rceil &:=:
\text{ceil} (p):=  \inf\{k\in {\mathbb{Z}} \colon  k\geq p\} \in {\mathbb{Z}}\cup \pm\infty .
\tag{\ref{fc}c}\label{{fc}c}
\end{align}
\end{subequations}

Let  $p\in \overline {\mathbb{R}}^+$, let 
\begin{equation}\label{Monp}
{\sf Mon}_p:=\Bigl\{ z\underset{\text{\tiny $z\in {\mathbb{C}}$}}{\longmapsto}  cz^k \colon p\geq k\in {\mathbb{N}}_0, c\in {\mathbb{C}}  \Bigr\}\overset{\eqref{{fc}f}}{=}{\sf Mon}_{\lfloor p\rfloor} \subset {\mathbb{C}}^{{\mathbb{C}}} 
\end{equation}
be the class of  {\it complex monomials\/}  of degree  at most $p$, and let 
\begin{equation}\label{{mon}P}
{\sf Pol}_p:=\sum_{k=0}^{\lfloor p\rfloor}   {\sf Mon}_k 
\end{equation}
be the class of  all {\it complex  polynomials\/} $P\underset{\text{\tiny $z\in {\mathbb{C}}$}}{\in} {\mathbb{C}}[z]$  of \textit{degree\/} $\deg P\leq p$.

In this article, we often consider the following two classes as a class $H\subset \overline {\mathbb{R}}^{{\mathbb{C}}}$ from Definition \ref{Def2}. 
The first class 
\begin{equation}\label{{mon}m}
{\sf mon}_p:={\rm Re\,} {\sf Mon}_p={\rm Im\,} {\sf Mon}_p={\sf mon}_{\lfloor p\rfloor}\subset {\sf har}, 
\quad {\sf mon}_{\infty}:={\sf mon}_{+\infty},
\end{equation}
consists of harmonic real homogeneous polynomials of degree $\leq p$.
The real \textit{linear span\/} of 
of this class ${\sf mon}_p$   coincides with the real space of all harmonic polynomials of degree at most $p$.

The second class is the union 
\begin{equation}\label{{mon}l}
{\sf lnmon}_p:=\ln_{{\mathbb{C}}}\cup {\sf mon}_p\subset {\sf sbh}_*  , \quad\text{where } \ln_{{\mathbb{C}}}:=\Bigl\{z \underset{\text{\tiny $z\in {\mathbb{C}}$}}{\longmapsto} \ln |z-w|\colon  w\in {\mathbb{C}} \Bigr\}
\end{equation}
is the class  of \textit{logarithmic functions\/} generated by the 
\textit{logarithmic kernel\/}
\begin{equation*}
\ln|\cdot-\cdot| \colon  (z,w)\longmapsto 
\begin{cases}
\ln |w-z| \quad &\text{if $z\neq w$},\\
-\infty \quad &\text{if $z= w$}, 
\end{cases} \qquad  (z,w)\in{\mathbb{C}}^2.
\end{equation*}

We denote by 
$D(z,t):=\bigl\{z'\in {\mathbb{C}}\colon |z'-z|< t\bigr\}$, $\overline D(z,t):=\bigl\{z'\in {\mathbb{C}}\colon |z'-z|\leq  t\bigr\}$, $\partial \overline D(z,t):=\overline D(z,t)\!\setminus\!  D(z,t)$  an {\it open disk,\/} a {\it closed disk,\/} a {\it circle of radius $t\in \overline {\mathbb{R}}^+$ centered at $z\in {\mathbb{C}}$}, respectively; $D(t):=D(0,t)$, $\overline D(t):=\overline D(0,t)$,  $\partial \overline D(t):=\partial \overline D(0,t)$, and ${\mathbb{D}}:=D(1)$, $\overline {\mathbb{D}}:=\overline D(1)$, $\partial {\mathbb{D}}=\partial \overline {\mathbb{D}}:=\partial \overline D(1)$  are the {\it open unit disk,\/} the {\it closed unit disk,\/} the {\it  unit circle,\/} respectively.

For $\nu\in {\sf Meas}$, $z\in {\mathbb{C}}$, and $0\leq r< R\in \overline {\mathbb{R}}^+$, we set  
\begin{subequations}\label{mB}
\begin{align}
\nu(z,r)&:=\nu\bigl(\overline D(z,r)\bigr), \quad 
\nu^{{\text{\tiny\rm rad}}}(r):=\nu(0,r), 
\tag{\ref{mB}$\nu$}\label{{mB}nu}\\
  {\sf N}_{\nu} (z, r,R)&:=\int_{r}^R \frac{\nu(z, t)}{t}{\,{\rm d}} t , \quad {\sf N}_{\nu}^{{\text{\tiny\rm rad}}} (r,R)
:= {\sf N}_{\nu} (0, r,R), 
\tag{\ref{mB}N}\label{{mB}N}
\\
\intertext{and also, for a function $t\underset{\text{\tiny $t\in [r,R]$}}{\longmapsto} f(t)\in {\mathbb{R}}$,  provided $\nu({\mathbb{C}})=\nu^{{\text{\tiny\rm rad}}}(+\infty)\in {\mathbb{R}}$,}
{\sf N}^{\star}_{\nu}\bigl(r,R; t\mapsto f(t)\bigr)&:=\int_{r}^{R}\frac{\nu^{{\text{\tiny\rm rad}}}(+\infty)-\nu^{{\text{\tiny\rm rad}}}(t)}{t}\, f(t){\,{\rm d}} t.
\tag{\ref{mB}$\star$}\label{{mB}star}
\end{align}
\end{subequations}

We list some elementary  properties of balayage with respect to certain classes from  \eqref{{mon}m} and \eqref{{mon}l}  and the classes ${\sf har}$ and ${\sf sbh}$.
\begin{propos}\label{pr:bal} 
	Let  $p\in {\mathbb{R}}^+$,  $\delta\in {\sf Meas}^+$, $\omega\in {\sf Meas}^+$, and 
	\begin{equation}\label{tmf}
	0<\delta ({\mathbb{C}})=\delta^{{\text{\tiny\rm rad}}}(+\infty)<+\infty, \quad   0<\omega ({\mathbb{C}})=\omega^{{\text{\tiny\rm rad}}}(+\infty)<+\infty.
	\end{equation}  
	\begin{enumerate}[{\rm [{b}1]}]
	\item\label{b2} 
		A measure $\omega\in {\sf Meas}^+$ is a ${\sf mon}_p$-balayage of  
		$\delta\in {\sf Meas}^+$  for $p<1$ if and only if $\omega({\mathbb{C}})=\delta ({\mathbb{C}})$.  
		\item\label{b3} 
		The following tree statements are equivalent: 
		\begin{enumerate}[{\rm (i)}]
	\item\label{b3i} 	$\omega$ is a ${\sf mon}_p$-balayage of  		$\delta$; 
\item\label{b3ii} $\int_{\text{\tiny ${\mathbb{C}}$}} z^k{\,{\rm d}} \delta(z)\underset{\text{\tiny $k\in {\mathbb{N}}_0$}}{=}\int_{\text{\tiny ${\mathbb{C}}$}} z^k{\,{\rm d}} \omega(z)$ for each $k\leq p$;
\item\label{b3iii} 
equality  \eqref{balnumu=} 
is fulfilled for each polynomial $h\overset{\eqref{{mon}P}}{\in} {\sf Pol}_p$.  
\end{enumerate}
\item\label{b5} The integral $\int h{\,{\rm d}} \omega\in {\mathbb{R}}\cup{-\infty}$  are well defined 
for each  function $h\overset{\eqref{{mon}l}}{\in} {\sf lnmon}_p$ under the condition\/  ${\sf N}^{\star}_{\nu}\bigl(1,+\infty; t\mapsto t^{\lfloor p \rfloor}\bigr)\overset{\eqref{{mB}star}}{<}+\infty$. 
	\end{enumerate}
	Suppose that, in addition to \eqref{tmf}, $\delta\in {\sf Meas}_{{\rm cmp}}^+$ and $ \omega\in {\sf Meas}_{{\rm cmp}}^+$.
	\begin{enumerate}
		\item[{\rm [{b}4]}]\label{b7} 
		If $\omega$ is a ${\sf mon}_{\infty}$-balayage of $\delta$, then
		$\omega$ is a ${\sf har}$-balayage of $\delta$.
		\item[{\rm [{b}5]}]\label{b8} 
		If $\omega$  is a ${\sf lnmon}_{\infty}$-balayage of $\delta$, then
		$\omega$ is a  ${\sf sbh}$-balayage of $\delta$.
	\end{enumerate} 
	\end{propos}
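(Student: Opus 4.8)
The plan is to prove the five assertions [b1]--[b5] in turn, exploiting the fact that each class in \eqref{{mon}m} and \eqref{{mon}l} consists of harmonic or subharmonic functions for which the defining integrals in Definition \ref{Def2} make sense under the mass and growth hypotheses \eqref{tmf} and the condition in [b3].

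\emph{Assertion [b1].} For $p<1$ we have $\lfloor p\rfloor =0$, so ${\sf mon}_p={\sf mon}_0$ consists precisely of the constant functions $z\mapsto c$ with $c\in{\mathbb{R}}$ (the real parts of the monomials $cz^0$). For such $h\equiv c$ one has $\int h\,{\rm d}\delta=c\,\delta({\mathbb{C}})$ and $\int h\,{\rm d}\omega=c\,\omega({\mathbb{C}})$, both finite by \eqref{tmf}. The inequality \eqref{balnumu} for all constants $c$ (positive and negative) is then equivalent to $\delta({\mathbb{C}})=\omega({\mathbb{C}})$; this is just \eqref{balnumu=} applied with $h=\pm1$.

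\emph{Assertion [b2].} Here ${\sf mon}_p$ consists of the functions ${\rm Re\,}(cz^k)$ and ${\rm Im\,}(cz^k)$, $0\le k\le p$, $c\in{\mathbb{C}}$, whose real linear span is the space of harmonic polynomials of degree $\le p$. The key point is that $\pm h\in{\sf mon}_p$ for every $h\in{\sf mon}_p$ (replace $c$ by $-c$), so by \eqref{balnumu=} the balayage inequality collapses to the equalities $\int{\rm Re\,}(cz^k)\,{\rm d}\delta=\int{\rm Re\,}(cz^k)\,{\rm d}\omega$ and likewise for ${\rm Im\,}$, for all $c\in{\mathbb{C}}$ and $k\le p$. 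Taking $c=1$ and $c=i$ and combining real and imaginary parts, this is exactly $\int z^k\,{\rm d}\delta=\int z^k\,{\rm d}\omega$ for all $k\le p$, i.e.\ the equivalence (i)$\Leftrightarrow$(ii). The equivalence (ii)$\Leftrightarrow$(iii) follows because ${\sf Pol}_p=\sum_{k=0}^{\lfloor p\rfloor}{\sf Mon}_k$ by \eqref{{mon}P}: a polynomial identity of integrals holds iff it holds monomial-by-monomial, and conversely any real-linear relation among the ${\rm Re\,}/{\rm Im\,}$ parts is recovered from the complex monomial equalities; one must note the integrals $\int z^k\,{\rm d}\delta$ are finite because $\delta$ has compact support in the cases where polynomials of positive degree occur, or invoke the integrability hypothesis directly.

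\emph{Assertion [b3].} The functions in ${\sf mon}_p$ are bounded on compacta, so $\int h\,{\rm d}\omega\in{\mathbb{R}}$; the only delicate term is $\ln_{{\mathbb{C}}}$, i.e.\ $h_w(z)=\ln|z-w|$. Split $h_w=\ln^+|z-w|-\ln^-|z-w|$; the positive part is bounded on bounded sets and grows like $\ln|z|+O(1)$, while $\ln^-|z-w|$ is a nonnegative locally integrable function that is $\omega$-integrable near $w$ since... \emph{this is where the growth condition enters}: using the layer-cake / Fubini identity $\int\ln^+|z|\,{\rm d}\omega(z)$ is controlled, via integration by parts in the radial variable, by ${\sf N}^{\star}_{\omega}(1,+\infty;t\mapsto 1)\le{\sf N}^{\star}_{\omega}(1,+\infty;t\mapsto t^{\lfloor p\rfloor})<+\infty$, and similarly the comparison with the monomials of degree up to $\lfloor p\rfloor$ needs the full weight $t^{\lfloor p\rfloor}$. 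Hence $\int h\,{\rm d}\omega$ is well defined with values in ${\mathbb{R}}\cup\{-\infty\}$, the value $-\infty$ occurring only through the $\ln^-$ contribution when $\omega$ charges $w$ too heavily. (I would state the radial integration-by-parts lemma relating $\int\ln|z-w|\,{\rm d}\omega$ to ${\sf N}_\omega$ and ${\sf N}^\star_\omega$ explicitly, since it is used again in [b5].)

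\emph{Assertions [b4] and [b5].} Now $\delta,\omega$ have compact support, say in $\overline D(0,R)$. For [b4]: every $h\in{\sf har}$ is harmonic on a neighbourhood of $\overline D(0,R)$, hence on $\overline D(0,R')$ for some $R'>R$, and there it is the uniform limit of the partial sums of its expansion $h(z)=\sum_{k\ge0}{\rm Re\,}(c_kz^k)+{\rm Im\,}(d_kz^k)$ (Taylor expansion of the analytic completion); these partial sums lie in the real span of ${\sf mon}_\infty$. By the ${\sf mon}_\infty$-balayage hypothesis together with \eqref{balnumu=} (each such partial sum and its negative lie in the span), $\int(\text{partial sum})\,{\rm d}\delta=\int(\text{partial sum})\,{\rm d}\omega$, and passing to the uniform limit against the finite measures $\delta,\omega$ (supported where the convergence is uniform) gives $\int h\,{\rm d}\delta=\int h\,{\rm d}\omega$, in particular \eqref{balnumu}. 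For [b5]: let $u\in{\sf sbh}$ on a neighbourhood of $\overline D(0,R)$. Use the Riesz representation \eqref{df:cm} on a slightly larger disk: $u(z)=\int\ln|z-\zeta|\,{\rm d}\varDelta_u(\zeta)+H(z)$ with $H$ harmonic there and $\varDelta_u\ge0$ the Riesz measure. Each kernel $z\mapsto\ln|z-\zeta|$ belongs to $\ln_{{\mathbb{C}}}\subset{\sf lnmon}_\infty$, so the ${\sf lnmon}_\infty$-balayage hypothesis gives $\int\ln|z-\zeta|\,{\rm d}\delta(z)\le\int\ln|z-\zeta|\,{\rm d}\omega(z)$ for every $\zeta$; integrating this inequality ${\rm d}\varDelta_u(\zeta)$ (Tonelli, the integrands being bounded below on the compact supports after subtracting the harmless $\ln^-$ singularity, or by monotone approximation) yields the inequality for the potential part, and [b4] applied to $H$ gives equality for the harmonic part. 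Adding, $\int u\,{\rm d}\delta\le\int u\,{\rm d}\omega$, which is \eqref{balnumu} for ${\sf sbh}$.

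\emph{Main obstacle.} The routine parts are the linear-algebra reductions in [b1]--[b2]. The genuine work is twofold: first, in [b3], establishing the precise radial integration-by-parts estimate that shows the weight $t^{\lfloor p\rfloor}$ in ${\sf N}^\star_\omega$ is exactly what guarantees well-definedness of $\int h\,{\rm d}\omega$ for $h\in{\sf lnmon}_p$ (and identifying when the value $-\infty$ can occur); second, in [b5], justifying the interchange of the integration in $\zeta$ (against $\varDelta_u$) with the integration in $z$ (against $\delta$ and $\omega$) for the logarithmic kernel, which has a $-\infty$ singularity on the diagonal — this requires either a Tonelli argument after bounding the kernel below on ${\rm supp\,}\delta\times{\rm supp\,}\varDelta_u$ enlarged appropriately, or an approximation of $u$ from above by smooth subharmonic functions and a monotone passage to the limit. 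I expect the Fubini/Tonelli justification in [b5] to be the step demanding the most care.
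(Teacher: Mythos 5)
The paper gives no proof of this proposition at all: it declares the proofs of [b1]--[b5] ``simple'' and omits them, noting only that [b4] and [b5] are implicitly proved in \cite[\S~1]{Kha91}. So there is nothing to compare your argument against line by line; judged on its own, your reconstruction is correct and follows the route one would expect. [b1] and [b2] are the linear reductions via the symmetry $-h\in{\sf mon}_p$ and \eqref{balnumu=}; [b3] is the radial integration by parts identifying ${\sf N}^{\star}_{\omega}\bigl(1,+\infty;t\mapsto t^{\lfloor p\rfloor}\bigr)<+\infty$ with $\int_{|z|\ge1}|z|^{\lfloor p\rfloor}{\,{\rm d}}\omega<+\infty$ (with $\int_{|z|\ge1}\ln|z|{\,{\rm d}}\omega<+\infty$ when $\lfloor p\rfloor=0$), which controls $\int h^+{\,{\rm d}}\omega$ for every $h\in{\sf lnmon}_p$; [b4] is the locally uniform expansion of an entire harmonic function into real parts of monomials integrated against the compactly supported finite measures; [b5] is the local Riesz decomposition plus Tonelli, legitimate because the logarithmic kernel is bounded above on the relevant compact product set.

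Two small points need tightening. First, in [b3] the clause asserting that $\ln^-|\cdot-w|$ is $\omega$-integrable near $w$ is false in general (an atom of $\omega$ at $w$ defeats it) and contradicts your own correct closing remark that the value $-\infty$ arises exactly from this term; well-definedness in ${\mathbb{R}}\cup\{-\infty\}$ requires only $\int\ln^+|\cdot-w|{\,{\rm d}}\omega<+\infty$, which the ${\sf N}^{\star}$ condition supplies, so delete the clause and keep the conclusion. (A similar slip occurs in [b2], where you appeal to compact support of $\delta$, which is not assumed there; the correct fallback is the one you also name, namely that Definition \ref{Def2} presupposes the integrals are well defined.) Second, in [b5] the harmonic remainder $H$ of the Riesz decomposition is harmonic only on the auxiliary disk, not on all of ${\mathbb{C}}$, so [b4] does not apply verbatim; you must rerun its power-series argument on that disk, which works because ${\rm supp\,}\delta$ and ${\rm supp\,}\omega$ are compactly contained in it. Neither issue affects the validity of the overall scheme.
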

We omit simple proofs of statements  [{b}1]--[{b}5] of Proposition \ref{pr:bal}
and note only that statements [{b}4] and [{b}5] are implicitly proved in \cite[\S~1]{Kha91}. 
Additional general  properties of balayage can be found in \cite{Kha20}.

\section{Logarithmic potentials,  balayage, and polar sets}

\begin{definition}\label{df:pot}  {\rm \cite{R}, \cite{Arsove}, \cite{Arsove53p}, 
		\cite[Definition 2]{Kha03}, \cite[3.1, 3.2]{KhaRoz18}}
	Let $\nu \in {\sf Meas}$ be a charge such that  ${\sf N}^{\star}_{|\nu|}(1,+\infty; t\mapsto 1)<+\infty$, i.e., 
	\begin{equation*}
	\int_1^{+\infty}  \frac{|\nu|^{{\text{\tiny\rm rad}}} (+\infty)-|\nu|^{{\text{\tiny\rm rad}}} (t)}{t} {\,{\rm d}} t <+\infty, 
	\end{equation*}

Its {\it logarithmic potential\/}  
	\begin{equation}\label{pmu}
		{{\sf pt}}_{\nu}(z):=\int \ln |w-z| {\,{\rm d}} \nu (w), 
			\end{equation}
is uniquely determined on \cite{Arsove}, \cite[3.1]{KhaRoz18}
\begin{equation}\label{Dom}
{\sf Dom}\, {\sf pt}_{\nu} \overset{\eqref{{mB}N}}{:=}
\Bigl\{z\in {\mathbb{C}}\colon
\min\bigl\{ {\sf N}_{\nu^-}(z,0,1), {\sf N}_{\nu^+}(z,0,1) \bigr\}<+ \infty  \Bigr\}
\end{equation}
by values in  $\overline {\mathbb{R}}$, 
and  the set $E:=(\complement \, {\sf Dom}\, {\sf pt}_{\omega})\!\setminus\!\infty$ is {\it polar  $G_{\delta}$-set\/} with zero {\it outer capacity\/} 
$\text{\rm Cap}^*(E)=0$. Evidently, ${\sf pt}_{\nu}\in {\sf har}\bigl({\mathbb{C}}\!\setminus\!{\rm supp\,}  |\nu|\bigr)$. If $\nu \in {\sf Meas}^+_{{\rm cmp}}$, then ${\sf pt}_{\nu}\in {\sf sbh}_*$, and  $\varDelta_{{\sf pt}_{\nu}}\overset{\eqref{df:cm}}{=}\nu\in {\sf Meas}^+$. 
\end{definition}

\begin{theorem}\label{Pr_pol}
	Let $E\in \text{\sf Bor}(\mathbb C)$ be a polar set, and let $\omega \in {\sf Meas}^+$ be a $\ln_{{\mathbb{C}}}$-balayage of  $\delta \in {\sf Meas}^+$  provided\/  ${\sf N}^{\star}_{\delta+\omega}(1,+\infty; t\mapsto 1)<+\infty$. 
	\begin{enumerate}[{\rm {\bf p}1.}]
		\item\label{p1}  If  $\int {{\sf pt}}_{\nu} {\,{\rm d}} \delta \in {\mathbb{R}}$  
	for each ${{\sf pt}}_{\nu}\not\equiv -\infty$ with $\nu \in {\sf Meas}_{{\rm cmp}}^+$, then $\omega (E)=0$. 
		\item\label{p2}  If  ${\delta} \in {\sf Meas}_{{\rm cmp}}^+$, 
then  $\omega (E\!\setminus\!{\rm supp\,}  {\delta})=0$. 
	\end{enumerate}
\end{theorem}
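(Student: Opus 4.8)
The plan is to reduce both assertions to the claim that $\omega$ gives zero mass to an arbitrary compact polar set (disjoint from ${\rm supp\,}\delta$, in the case of \textbf{p}2) and then to test the balayage inequality against the logarithmic potential of a cleverly chosen compactly supported measure that equals $-\infty$ on that compact set. First I would rephrase the hypothesis: applying \eqref{balnumu} to the functions $z\mapsto\ln|z-w|$, $w\in{\mathbb{C}}$, shows, by \eqref{pmu}, that ``$\omega$ is a $\ln_{{\mathbb{C}}}$-balayage of $\delta$'' is precisely the pointwise inequality ${\sf pt}_{\delta}(w)\le {\sf pt}_{\omega}(w)$ for all $w\in{\mathbb{C}}$. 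The condition ${\sf N}^{\star}_{\delta+\omega}(1,+\infty;t\mapsto 1)<+\infty$ is equivalent, by the standard Fubini rewriting, to $\int\ln^+|w|\,{\rm d}(\delta+\omega)(w)<+\infty$ (where $\ln^+:=\max\{\ln,0\}$); this makes $\ln^+|\cdot|$ both $\delta$- and $\omega$-integrable and bounds ${\sf pt}_{\delta}$ and ${\sf pt}_{\omega}$ from above on every bounded subset of ${\mathbb{C}}$, so the above inequality holds between functions with values in $[-\infty,+\infty)$. Since $\omega$ is a finite Borel measure on ${\mathbb{C}}$ it is inner regular with respect to compact sets, and every compact subset of a polar set is polar; hence it suffices to prove $\omega(K)=0$ for each compact polar $K\subset{\mathbb{C}}$ in the case of \textbf{p}1, and for each compact polar $K\subset{\mathbb{C}}\!\setminus\!{\rm supp\,}\delta$ in the case of \textbf{p}2.

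Next I would construct the test measure. Fix a nonempty such $K$. By the classical characterization of polar sets there is $u\in{\sf sbh}_*$ with $K\subset\{u=-\infty\}$. Choose a bounded open $U\supset K$ with ${\rm clos\,} U$ compact, and, for \textbf{p}2, also with ${\rm clos\,} U\cap{\rm supp\,}\delta=\varnothing$ (possible since ${\rm dist}(K,{\rm supp\,}\delta)>0$). By the Riesz decomposition of $u$ on $U$ one has $u={\sf pt}_{\nu}+h$ on $U$, where $\nu:=\varDelta_u|_{U}\in{\sf Meas}_{{\rm cmp}}^+$ with ${\rm supp\,}\nu\subset{\rm clos\,} U$ and $h$ is harmonic on $U$. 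Consequently ${\sf pt}_{\nu}\equiv-\infty$ on $K$ (because $h$ is finite there); ${\sf pt}_{\nu}\in{\sf sbh}_*$ by Definition~\ref{df:pot}, so ${\sf pt}_{\nu}\not\equiv-\infty$; ${\sf pt}_{\nu}^+$ is bounded above on ${\mathbb{C}}$ by a constant multiple of $1+\ln^+|\cdot|$, hence is $\omega$-integrable; and in the case of \textbf{p}2 the function ${\sf pt}_{\nu}$ is bounded on ${\rm supp\,}\delta$ because ${\rm supp\,}\nu$ lies at a positive distance from the compact set ${\rm supp\,}\delta$.

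Then I would integrate ${\sf pt}_{\delta}\le {\sf pt}_{\omega}$ against $\nu$ and interchange the order of integration — Tonelli's theorem applied separately to the positive and the negative part of $\ln|z-w|$, the $\ln^+$-double integral being finite by the estimates above (and, for \textbf{p}2, also because $\nu$ has compact support) — to obtain
\begin{equation*}
\int{\sf pt}_{\nu}\,{\rm d}\delta=\int{\sf pt}_{\delta}\,{\rm d}\nu\le\int{\sf pt}_{\omega}\,{\rm d}\nu=\int{\sf pt}_{\nu}\,{\rm d}\omega ,
\end{equation*}
all four integrals being well defined in $[-\infty,+\infty)$. For \textbf{p}1 the left-hand side lies in ${\mathbb{R}}$ by the hypothesis, applied to $\nu\in{\sf Meas}_{{\rm cmp}}^+$ with ${\sf pt}_{\nu}\not\equiv-\infty$; for \textbf{p}2 it lies in ${\mathbb{R}}$ since ${\sf pt}_{\nu}$ is bounded on ${\rm supp\,}\delta$ and $\delta({\mathbb{C}})<+\infty$. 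Hence $\int{\sf pt}_{\nu}\,{\rm d}\omega>-\infty$. On the other hand ${\sf pt}_{\nu}\equiv-\infty$ on $K$ while $\int{\sf pt}_{\nu}^+\,{\rm d}\omega<+\infty$, so $\omega(K)>0$ would force $\int{\sf pt}_{\nu}\,{\rm d}\omega=-\infty$, a contradiction. Therefore $\omega(K)=0$, and the theorem follows.

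I expect the only genuine difficulties to be technical rather than conceptual: keeping every logarithmic integral well defined in $[-\infty,+\infty)$ and rigorously justifying the Fubini--Tonelli interchanges under the single weak restriction ${\sf N}^{\star}_{\delta+\omega}(1,+\infty;t\mapsto 1)<+\infty$, and carrying out the localization of the Riesz mass of $u$ to a bounded neighbourhood of $K$ — disjoint from ${\rm supp\,}\delta$ in the case of \textbf{p}2 — while preserving the property ${\sf pt}_{\nu}\equiv-\infty$ on $K$.
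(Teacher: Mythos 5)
Your argument is correct, and for part {\bf p}1 it is essentially the paper's proof: both reduce the claim to showing that $\omega$ charges no set of the form $\{{\sf pt}_{\nu}=-\infty\}$ containing the polar set, and both extract $\int {\sf pt}_{\nu}{\,{\rm d}}\omega>-\infty$ from the balayage inequality together with the hypothesis $\int {\sf pt}_{\nu}{\,{\rm d}}\delta\in{\mathbb{R}}$. You are in fact slightly more careful than the paper here: the hypothesis is only a $\ln_{{\mathbb{C}}}$-balayage, so passing from the single kernels $\ln|\cdot-w|$ to the potential ${\sf pt}_{\nu}$ requires exactly the Fubini--Tonelli interchange $\int {\sf pt}_{\nu}{\,{\rm d}}\delta=\int {\sf pt}_{\delta}{\,{\rm d}}\nu\leq\int {\sf pt}_{\omega}{\,{\rm d}}\nu=\int {\sf pt}_{\nu}{\,{\rm d}}\omega$ that you spell out (justified by $\int\ln^+|z|{\,{\rm d}}(\delta+\omega)(z)<+\infty$, which is indeed equivalent to ${\sf N}^{\star}_{\delta+\omega}(1,+\infty;t\mapsto 1)<+\infty$); the paper applies \eqref{balnumu} to ${\sf pt}_{\nu}$ directly and leaves this step implicit.

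For part {\bf p}2 your route is genuinely different from the paper's. The paper keeps the global witness ${\sf pt}_{\nu}$ with $E\subset E_{\nu}=\{{\sf pt}_{\nu}=-\infty\}$ and repairs its behaviour near ${\rm supp\,}\delta$ by replacing it on a shrinking sequence of Dirichlet-regular open neighbourhoods $O_k\supset{\rm supp\,}\delta$ with its harmonic extension, producing functions $u_k\in{\sf sbh}_*$ bounded below on ${\rm supp\,}\delta$ and still equal to $-\infty$ on $E_{\nu}\!\setminus\!O_k$; this requires the construction of regular $O_k$ (complements without isolated points) and, again, an implicit justification for testing the $\ln_{{\mathbb{C}}}$-balayage against $u_k$. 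You instead use inner regularity of the finite measure $\omega$ to reduce to a compact polar $K\subset E\!\setminus\!{\rm supp\,}\delta$ and then localize the Riesz mass of the subharmonic witness to a bounded neighbourhood $U\supset K$ with ${\rm clos\,}U\cap{\rm supp\,}\delta=\varnothing$, so that $\nu:=\varDelta_u|_U$ gives ${\sf pt}_{\nu}\equiv-\infty$ on $K$ while ${\sf pt}_{\nu}$ is bounded on ${\rm supp\,}\delta$, and the finiteness of $\int{\sf pt}_{\nu}{\,{\rm d}}\delta$ comes for free. What the paper's approach buys is that it never needs inner regularity and handles all of $E\!\setminus\!{\rm supp\,}\delta$ with a single potential per $k$; what yours buys is the avoidance of the Dirichlet-problem machinery and a cleaner reduction of {\bf p}2 to the same one-line contradiction as {\bf p}1. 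Both are sound.
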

\begin{proof} We can consider only  $E\Subset {\mathbb{C}}$. 
	There is a potential ${{\sf pt}}_{\nu}$ with $\nu \in {\sf Meas}_{{\rm cmp}}^+$ such that this set $E$ is included in the   minus-infinity $G_{\delta}$-set  $E_{\nu}:=(-\infty)_{{{\sf pt}}_{\nu}}
	:=\bigl\{z\in {\mathbb{C}}\colon {\sf pt}_{\nu}(z)=-\infty\bigr\}\Subset {\mathbb{C}}$.
	
	{\bf p}\ref{p1}. The condition  $\int {{\sf pt}}_{\nu} {\,{\rm d}} \delta >-\infty$ entails
	\begin{equation*}
	-\infty <\int {{\sf pt}}_{\nu} {\,{\rm d}} \delta \overset{\eqref{balnumu}}{\leq}  
	\int {{\sf pt}}_{\nu} {\,{\rm d}} \omega 
	=\left(\int_{E_{\nu}}+\int_{{\mathbb{C}}\!\setminus\!E_{\nu}}\right)
	{{\sf pt}}_{\nu} {\,{\rm d}} \omega 	=(-\infty) \cdot \omega (E_{\nu})+\int_{{\mathbb{C}}\!\setminus\!E_{\nu}}
	{{\sf pt}}_{\nu} {\,{\rm d}} \omega. 
	\end{equation*}
	This is only possible when  $\omega (E_{\nu}) =0$.
	
	{\bf p}\ref{p2}. For any $k\in {\mathbb{N}}$ 
	there exists an finite cover of ${\rm supp\,}  {\delta}\Subset {\mathbb{C}}$ by disks $D(x_j,1/k)\Subset {\mathbb{C}}$ such that the open sets 
	\begin{equation*}
	O_k:=\bigcup_j D(x_j,1/k)\Subset {\mathbb{C}},\quad
	{\rm supp\,}  {\delta} \Subset O_{k+1}\underset{\text{\tiny $k\in {\mathbb{N}}$}}{\subset} O_{k},  \quad {\rm supp\,}  {\delta} =\bigcap_{k\in {\mathbb{N}}} O_k,
	\end{equation*}
	have complements $\complement O_k$ \textit{without isolated points.\/} Then 
	every open subset  $O_k\Subset {\mathbb{C}}$ is regular for the Dirichlet problem.
	It suffices to prove that the equality $\omega (E_\nu\!\setminus\!O_k)=0$ holds for each 
	$k\in {\mathbb{N}}$. Consider the functions 
	\begin{equation*}\label{Uk}
	u_k=\begin{cases}
	{{\sf pt}}_{\nu} \text{ \it  on ${\mathbb{C}} \!\setminus\!O_k$},\\
	\text{\it the harmonic extension of ${{\sf pt}}_{\nu}$ from $\partial O_k$ into $O_k$}\text{ on $O_k$},
	\end{cases}   
	 k\in {\mathbb{N}}.
	\end{equation*}
	We have  $u_k\in {\sf sbh}_*$, and $u_k$ is bounded from below in ${\rm supp\,}  {\delta} \Subset O_k$. Hence
	\begin{multline*}
	-\infty <\int u_k {\,{\rm d}} {\delta}
	\overset{\eqref{balnumu}}{\leq}
	\int u_k {\,{\rm d}} \omega=
	\left(\int_{{\mathbb{C}}\!\setminus\!(E_{\nu}\!\setminus\!O_k)}+\int_{E_{\nu}\!\setminus\!O_k}\right) u_k {\,{\rm d}} \omega
	\\
	\leq {\rm const}_{\nu} \int_{{\mathbb{C}}} \log (2+|z|) {\,{\rm d}} \omega (z)+(-\infty)\cdot \omega(E_{\nu}\!\setminus\!O_k)
	\leq {\rm const}_{\nu, \omega}+(-\infty)\cdot \omega(E_{\nu}\!\setminus\!O_k),
	\end{multline*}
where here and further we denote by ${\rm const}_{a_1,a_2, \dots}$ constants that depend only 
	on the parameters-indexes $a_1,a_2, \dots$; ${\rm const}^+_{a_1,a_2, \dots}\in \mathbb R^+$.
	This is only possible when   $\omega(E_{\nu}\!\setminus\!O_k)=0$.
\end{proof}

\begin{remark}\label{rem:contr} Theorem \ref{Pr_pol} is not true for ${\sf mon}_p$-balayage\/ {\rm (see \cite[Example]{MenKha19})}. 
\end{remark}

\section{Duality for ${\sf mon}_p$-balayage  and ${\sf lnmon}_p$-balayage}

For numbers $r_0\in {\mathbb{R}}^+ \!\setminus\! 0$ and $q\in {\mathbb{N}}_0$, we consider the classical  subharmonic {\it Weierstrass\,--\,Hadamard kernel 
of genus\/} $q$ \cite[3]{Arsove53p}, \cite[(3.2)]{Kha09}, \cite[\S~2, Example 3($E_q$)]{Kha07},
\cite{Bergweiler}, \cite{Gil}, \cite{Hansmann}, \cite{HK}, \cite{Merzlyakov}
\begin{subequations} \label{kern:WA}
\begin{align}
k_q(w , z)&:=\log |w-z| 
\text{ if }(w , z) \in r_0 {\mathbb{D}} \times {\mathbb{C}},
\tag{\ref{kern:WA}$_0$}\label{{kern:WA}0}
\\
k_q(w , z)&:=\log \Bigl|1-\frac{z}{w}\Bigr|+\sum\limits_{k=1}^q  {\rm Re} \frac{z^k}{kw^k} 
\text{ if }(w , z) \in ({\mathbb{C}}\!\setminus\! r_0{\mathbb{D}}) \times {\mathbb{C}},
\tag{\ref{kern:WA}$_\infty$}\label{{kern:WA}i}
\end{align}
\end{subequations} 
where $\sum_{k=1}^{0}\dots :=0$,  $r_0$ is indicated only when necessary, and the Riesz measure $\varDelta_{k_q(w, \cdot)}
\overset{\eqref{df:cm}}\in {\sf Meas}^{1+}_{{\rm cmp}}$
is the {\it Dirac measure\/} at $w$ with  ${\rm supp\,}  \varDelta_{k_q(w, \cdot)} =w$  for each  $w\in {\mathbb{C}}$.

\begin{propos}\label{pr2} 
If $\delta, \omega\in {\sf Meas}^+$ 
are measures such that ${\sf N}^{\star}_{\delta+\omega}(1,+\infty; t\mapsto t^{\lfloor p\rfloor})\overset{\eqref{{mB}star}}{<}+\infty$ for $p\in {\mathbb{R}}^+$, and $\omega$
is a ${\sf mon}_p$-balayage  of $\delta$,   then  
\begin{equation}\label{kpmt}
\int_{{\mathbb{C}}} k_{\lfloor p\rfloor} (w,z){\,{\rm d}} (\omega-\delta)(z) ={\sf pt}_{\omega-\delta}(w)\quad \text{for each $w\overset{\eqref{Dom}}{\in} {\sf Dom}\, {\sf pt}_{\omega-\delta}$.}
\end{equation}
 If $\omega$ is a ${\sf lnmon}_p$-balayage  of $\delta$,   then
\begin{equation}\label{kpmtl}
\int_{{\mathbb{C}}} k_{\lfloor p\rfloor} (w,z){\,{\rm d}} \delta(z)\leq  \int_{{\mathbb{C}}} k_{\lfloor p\rfloor} (w,z){\,{\rm d}} \omega(z)\quad \text{for each $w\in {\mathbb{C}}$}.
\end{equation}
\end{propos}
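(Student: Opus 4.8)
The plan is to reduce both assertions to the defining (in)equalities of the relevant balayage, by substituting the explicit form of the Weierstrass--Hadamard kernel \eqref{kern:WA} and exploiting the moment identities that the ${\sf mon}_p$-part of a balayage forces. First I would record two preliminary observations. The hypothesis ${\sf N}^{\star}_{\delta+\omega}\bigl(1,+\infty;t\mapsto t^{\lfloor p\rfloor}\bigr)<+\infty$ already entails $(\delta+\omega)({\mathbb C})<+\infty$ (otherwise \eqref{{mB}star} is undefined) and, via the Fubini-type identity $\int_1^{+\infty}(\delta+\omega)\bigl(\{|z|>t\}\bigr)\,t^{n-1}\,{\rm d}t=\frac1n\int_{|z|>1}\bigl(|z|^{n}-1\bigr)\,{\rm d}(\delta+\omega)(z)$ for $n\ge1$ (and $=\int_{|z|>1}\log|z|\,{\rm d}(\delta+\omega)(z)$ for $n=0$), it is equivalent --- for $\lfloor p\rfloor\ge1$ --- to $\int_{{\mathbb C}}|z|^{\lfloor p\rfloor}\,{\rm d}(\delta+\omega)(z)<+\infty$; in every case each moment $\int z^{k}\,{\rm d}\delta$ and $\int z^{k}\,{\rm d}\omega$ with $0\le k\le\lfloor p\rfloor$ converges absolutely, and ${\sf pt}_{\omega-\delta}$, ${\sf pt}_{\delta}$, ${\sf pt}_{\omega}$ are well defined (Definition \ref{df:pot}, since $|\omega-\delta|\le\delta+\omega$). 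Secondly, as ${\sf mon}_p={\sf mon}_{\lfloor p\rfloor}$ is stable under $h\mapsto -h$, in both parts one gets $\int h\,{\rm d}\delta=\int h\,{\rm d}\omega$ for every $h\in{\sf mon}_p$ (via \eqref{balnumu=}, because $\pm h\in{\sf mon}_p$ in the first part and $\pm h\in{\sf mon}_p\subset{\sf lnmon}_p$ in the second, the integrals being finite by the growth step); taking $h={\rm Re}(z^{k})$ and $h={\rm Im}(z^{k})$ this gives $\int z^{k}\,{\rm d}\delta=\int z^{k}\,{\rm d}\omega$ for $0\le k\le\lfloor p\rfloor$, hence $\int {\rm Re}\,P\,{\rm d}\delta=\int {\rm Re}\,P\,{\rm d}\omega$ for every $P\in{\sf Pol}_{\lfloor p\rfloor}$.

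Next I would split the kernel. For $|w|<r_0$ one has $k_{\lfloor p\rfloor}(w,z)=\log|w-z|$ by \eqref{kern:WA}, so there is nothing to do: the left side of \eqref{kpmt} is then by definition ${\sf pt}_{\omega-\delta}(w)$, and \eqref{kpmtl} is the balayage inequality applied to $z\mapsto\ln|z-w|\in\ln_{{\mathbb C}}$. For $|w|\ge r_0$, using $\log|1-z/w|=\log|w-z|-\log|w|$ in \eqref{kern:WA} I would write $k_{\lfloor p\rfloor}(w,z)=\log|w-z|+R_w(z)$ with $R_w(z):=-\log|w|+\sum_{k=1}^{\lfloor p\rfloor}\frac1k\,{\rm Re}\,\frac{z^{k}}{w^{k}}$. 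For fixed $w$ the remainder $R_w$ is the real part of a polynomial of degree $\le\lfloor p\rfloor$, so $\int R_w\,{\rm d}\delta=\int R_w\,{\rm d}\omega=:c_w\in{\mathbb R}$ by the preliminary step, and the bound $|R_w(z)|\le|\log|w||+\sum_{k=1}^{\lfloor p\rfloor}r_0^{-k}|z|^{k}$ shows that $R_w$ is integrable against $\delta+\omega$, hence against $|\omega-\delta|$.

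Now I would finish each assertion by integrating this decomposition. For \eqref{kpmt}: fix $w\in{\sf Dom}\,{\sf pt}_{\omega-\delta}$ with $|w|\ge r_0$. Splitting $\log|w-\cdot|$ into its part near $w$ and its part at infinity, the latter is $(\delta+\omega)$-integrable by the preliminary step, while the membership $w\in{\sf Dom}\,{\sf pt}_{\omega-\delta}$ is exactly what rules out an $\infty-\infty$ in the former, so $\int\log|w-\cdot|\,{\rm d}(\omega-\delta)={\sf pt}_{\omega-\delta}(w)$ is well defined in $\overline{\mathbb R}$; combined with $R_w\in L^{1}(|\omega-\delta|)$ and $\int R_w\,{\rm d}(\omega-\delta)=c_w-c_w=0$, additivity of the integral with respect to the signed measure $\omega-\delta$ gives $\int_{{\mathbb C}}k_{\lfloor p\rfloor}(w,z)\,{\rm d}(\omega-\delta)(z)={\sf pt}_{\omega-\delta}(w)$. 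For \eqref{kpmtl}: the ${\sf lnmon}_p$-balayage applied to $z\mapsto\ln|z-w|$ gives ${\sf pt}_{\delta}(w)\le{\sf pt}_{\omega}(w)$ for every $w\in{\mathbb C}$; for $|w|\ge r_0$, integrating the decomposition against $\delta$ and against $\omega$ separately yields $\int_{{\mathbb C}}k_{\lfloor p\rfloor}(w,z)\,{\rm d}\delta(z)={\sf pt}_{\delta}(w)+c_w$ and $\int_{{\mathbb C}}k_{\lfloor p\rfloor}(w,z)\,{\rm d}\omega(z)={\sf pt}_{\omega}(w)+c_w$, so \eqref{kpmtl} follows by adding $c_w$, and for $|w|<r_0$ it is that inequality itself.

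The step I expect to be the main obstacle is the integral bookkeeping rather than anything conceptual: one must check that $R_w$ is genuinely $(\delta+\omega)$-integrable --- which is precisely what the growth hypothesis ${\sf N}^{\star}_{\delta+\omega}\bigl(1,+\infty;t\mapsto t^{\lfloor p\rfloor}\bigr)<+\infty$ supplies, through its equivalence with finiteness of the $\lfloor p\rfloor$-th moment at infinity --- and that in the proof of \eqref{kpmt} the splitting of $\int k_{\lfloor p\rfloor}(w,\cdot)\,{\rm d}(\omega-\delta)$ into a logarithmic part plus $\int R_w\,{\rm d}(\omega-\delta)$ is legitimate at \emph{every} $w\in{\sf Dom}\,{\sf pt}_{\omega-\delta}$, i.e. that no $\infty-\infty$ indeterminacy slips in; the choice of ${\sf Dom}\,{\sf pt}_{\omega-\delta}$ as the set of admissible $w$ in \eqref{kpmt} is tailored exactly to exclude this. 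Everything else is elementary algebra with \eqref{kern:WA} together with the moment identities produced by the ${\sf mon}_p$-component of the balayage.
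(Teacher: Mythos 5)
Your argument is correct and follows essentially the same route as the paper's own (much terser) proof: split off the case $|w|<r_0$, where $k_{\lfloor p\rfloor}(w,\cdot)$ is the logarithmic kernel itself, and for $|w|\geq r_0$ decompose $k_{\lfloor p\rfloor}(w,z)=\log|w-z|+R_w(z)$ with $R_w$ the real part of a polynomial of degree $\leq\lfloor p\rfloor$, then invoke the moment equalities forced by the ${\sf mon}_p$-part of the balayage together with the integrability supplied by the ${\sf N}^{\star}$ hypothesis (statements [b2] and [b3] of Proposition \ref{pr:bal}). Your write-up merely makes explicit the bookkeeping that the paper delegates to those cited statements.
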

\begin{proof} If $w\overset{\eqref{{kern:WA}0}}{\in} r_0 {\mathbb{D}}$, then  \eqref{kpmt}  is obvious.
If $w\overset{\eqref{{kern:WA}i}}{\in} {\mathbb{C}}\!\setminus\!r_0 {\mathbb{D}}$, then, by statements [b\ref{b3}] and [b\ref{b5}] of Proposition \ref{pr:bal}, the integration of  
$k_{\lfloor p\rfloor}$ from \eqref{kern:WA} with respect to the charge $\omega-\delta\in {\sf Meas}$
gives \eqref{kpmt}. 

If $\omega$ is a ${\sf lnmon}_p$-balayage  of $\delta$, then, using definition \eqref{kern:WA} of the Weierstrass\,--\,Hadamard kernel 
of genus $\lfloor p\rfloor$, we obtain \eqref{kpmtl}
by Definition \ref{Def2} with \eqref{balnumu} for $h\overset{\eqref{{mon}l}}{\in}  \ln_{{\mathbb{C}}}$ and Proposition   \ref{pr:bal} with [b\ref{b2}]--[b\ref{b3}].
\end{proof}

For $r\in {\mathbb{R}}^+$ and  $u\colon \partial \overline D(z, r)\to {\overline {\mathbb{R}}}$ we define
\begin{equation}\label{df:MCBM}
{\sf M}_u(z,r):=\sup_{\partial D(z,r)}u,\quad {\sf M}_u^{{\text{\tiny\rm rad}}}(r):={\sf M}_u(0,r). 
\end{equation}
If $u\in {\sf sbh}$, then ${\sf M}_u(z,r)=\sup\limits_{0\leq t\leq r}{\sf M}_u(z,t)$, and  ${\sf M}_u^{{\text{\tiny\rm rad}}}(r)=\sup\limits_{0\leq t\leq r}{\sf M}_u^{{\text{\tiny\rm rad}}}(t)$.

An elementary consequence  the classical Borel\,--\,Carath\'eodory inequality for disks $\overline  D(2r)$ is 

\begin{propos}\label{prBC}
If $f$ is an entire function on ${\mathbb{C}}$, then 
\begin{equation*}
{\sf M}_{|f|}(r)\leq 2 {\sf M}_{{\rm Re}\, f}(2r)+3\bigl|f(0)\bigr|\quad\text{for each $r\in {\mathbb{R}}^+$}.
\end{equation*}  
\end{propos}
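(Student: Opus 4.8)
The statement to prove is Proposition~\ref{prBC}: the Borel--Carathéodory-type bound ${\sf M}_{|f|}(r)\leq 2{\sf M}_{{\rm Re}\,f}(2r)+3|f(0)|$ for entire $f$.

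My proof plan:

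\medskip

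\emph{Approach.} The plan is to derive the bound from the classical Borel--Carathéodory inequality, which states that for $f$ holomorphic on $\overline D(R)$ and $0<r<R$,
\[
{\sf M}_{|f|}(r)\leq \frac{2r}{R-r}\,{\sf M}_{{\rm Re}\,f}(R)+\frac{R+r}{R-r}\,|f(0)|,
\]
where one should read ${\sf M}_{{\rm Re}\,f}(R)$ as $\sup_{|z|=R}{\rm Re}\,f(z)$ (using that ${\rm Re}\,f$ is harmonic, hence attains its sup on the boundary circle, consistent with \eqref{df:MCBM}). Specializing to $R=2r$ gives coefficients $\frac{2r}{r}=2$ and $\frac{3r}{r}=3$, which is exactly the claimed inequality. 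So the real content is a clean statement and proof of the classical inequality itself, for which I would give the standard Schwarz-lemma argument.

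\medskip

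\emph{Key steps.} First I would reduce to the case $f(0)=0$: if not, replace $f$ by $f-f(0)$, prove the inequality for it, and then restore $f(0)$ by the triangle inequality, noting $|f(0)|\le{\sf M}_{{\rm Re}\,f}(2r)$ is \emph{not} needed — one just tracks the extra $|f(0)|$ terms, and $2|f(0)|+|f(0)|=3|f(0)|$ matches. More precisely, write $g=f-f(0)$; then ${\sf M}_{|f|}(r)\le {\sf M}_{|g|}(r)+|f(0)|$ and ${\sf M}_{{\rm Re}\,g}(2r)\le {\sf M}_{{\rm Re}\,f}(2r)+|f(0)|$, so it suffices to get ${\sf M}_{|g|}(r)\le 2{\sf M}_{{\rm Re}\,g}(2r)$, which after undoing the substitution yields ${\sf M}_{|f|}(r)\le 2{\sf M}_{{\rm Re}\,f}(2r)+2|f(0)|+|f(0)|$. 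Second, with $f(0)=0$, let $A={\sf M}_{{\rm Re}\,f}(2r)=\sup_{|z|=2r}{\rm Re}\,f(z)$; we may assume $A>0$ (if $A\le 0$ then ${\rm Re}\,f\le 0$ on a circle, hence on the disk by the maximum principle, forcing $f\equiv f(0)=0$ by the open mapping theorem, and the inequality is trivial). Third, consider the Möbius-type map $\varphi(w)=\dfrac{f(w)}{2A-f(w)}$ for $|w|\le 2r$: since ${\rm Re}\,f\le A<2A-{\rm Re}\,f$ on $|w|=2r$, the denominator does not vanish and $|\varphi(w)|\le 1$ there, hence on the closed disk by the maximum modulus principle. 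Fourth, $\varphi(0)=0$, so the Schwarz lemma gives $|\varphi(w)|\le |w|/(2r)$ for $|w|\le 2r$; solving $f=2A\varphi/(1+\varphi)$ gives $|f(w)|\le \dfrac{2A|\varphi(w)|}{1-|\varphi(w)|}\le \dfrac{2A\cdot |w|/(2r)}{1-|w|/(2r)}$. Fifth, evaluating at $|w|=r$ (using that ${\sf M}_{|f|}(r)=\sup_{|z|=r}|f(z)|$ by the maximum principle, matching \eqref{df:MCBM}) yields ${\sf M}_{|f|}(r)\le \dfrac{2A\cdot(1/2)}{1-1/2}=2A=2{\sf M}_{{\rm Re}\,f}(2r)$, as desired.

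\medskip

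\emph{Main obstacle.} There is no serious obstacle; this is a textbook result and the excerpt itself says it is an ``elementary consequence'' of Borel--Carathéodory. The only points requiring a little care are: (a) making sure the sup-over-boundary conventions in \eqref{df:MCBM} line up with the sup-over-disk quantities appearing in the maximum-principle arguments — this is fine since ${\rm Re}\,f$ is harmonic and $|f|$ is subharmonic, so sup over $\partial D(r)$ equals sup over $\overline D(r)$; (b) the degenerate case $A\le 0$, handled by the maximum principle as above; and (c) verifying the denominator $2A-f(w)$ is zero-free on the disk, which follows from the maximum principle applied to ${\rm Re}\,f$. Thus I expect the write-up to be short, with the Schwarz-lemma estimate of $\varphi$ being the computational heart.
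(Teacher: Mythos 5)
Your proposal is correct and follows exactly the route the paper indicates: it specializes the classical Borel--Carath\'eodory inequality to $R=2r$, where the coefficients $\frac{2r}{R-r}=2$ and $\frac{R+r}{R-r}=3$ give the stated bound, and the paper itself offers no further proof beyond calling this an elementary consequence for disks $\overline D(2r)$. Your additional Schwarz-lemma derivation of the classical inequality (reduction to $f(0)=0$, the map $\varphi=f/(2A-f)$, and the degenerate case $A\le 0$) is the standard argument and is carried out correctly.
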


\begin{theorem}\label{DT1}
If  $\omega\in {\sf Meas}_{{\rm cmp}}^+$ is a\/  ${\sf mon}_p$-balayage of
	$\delta\in {\sf Meas}_{{\rm cmp}}^+$,  then 
	\begin{subequations}\label{pmu0}
		\begin{align}
	{\sf pt}_{\delta}\in {\sf sbh}_*\cap 
		{\sf har}({\mathbb{C}} \!\setminus\! {\rm supp\,}  \delta),&	\quad 	{\sf pt}_{\omega}\in {\sf sbh}_*\cap 
		{\sf har}({\mathbb{C}} \!\setminus\! {\rm supp\,}  \omega),
		\tag{\ref{pmu0}p}\label{{pmu0}p}\\
		{\sf pt}_{\omega}(w)=  {{\sf pt}}_{\delta}(w)&+O\Bigl(\frac{1}{|w|^{\lfloor p\rfloor+1}}\Bigr) \quad 
\text{as }w\to \infty .
		\tag{\ref{pmu0}O}\label{{pmu0}o}
		\end{align}
	\end{subequations}
If   $\omega$ is a ${\sf lnmon}_p$-balayage of ${\delta}$, then, in addition to\/  \eqref{pmu0}, we have 
	\begin{equation}\label{pmu0+}
	{\sf pt}_{\omega}\geq {{\sf pt}}_{\delta} \quad \text{on ${\mathbb{C}}$.}
	\end{equation}
	\underline{Conversely}, suppose that there are a set $S\Subset {\mathbb{C}}$,
	and  	functions  
	\begin{subequations}\label{p}
		\begin{align}
		d\in {\sf sbh} \cap  {\sf har} ({\mathbb{C}}\!\setminus\! S),& \quad v\in {\sf sbh} \cap  {\sf har} ({\mathbb{C}}\!\setminus\! S) 
		\quad\text{\rm  (cf. \eqref{{pmu0}p})}
		\tag{\ref{p}p}\label{{pmu0+}p}
		\\
\intertext{such that}
		v(w)= d(w)&+O\Bigl(\frac{1}{|w|^{\lfloor p\rfloor+1}}\Bigr) \quad\text{as } w\to \infty \quad \text{\rm  (cf. \eqref{{pmu0}o})}
		\tag{\ref{p}O}\label{{pmu0+}o}.
		\end{align}
	\end{subequations}
	Then the  Riesz measure
	\begin{equation}\label{mu}
	\omega:=\varDelta_v\overset{\eqref{df:cm}}{:=}\frac{1}{2\pi}\bigtriangleup\! v \overset{\eqref{p}}{\in} {\sf Meas}^+({\rm clos\,} S)\subset {\sf Meas}_{{\rm cmp}}^+
	\end{equation} 
	of $v$ is a ${\sf mon}_p$-balayage of the Riesz measure
	\begin{equation}\label{d}
	\delta:=\varDelta_d\overset{\eqref{df:cm}}{:=}\frac{1}{2\pi}\bigtriangleup\! d \overset{\eqref{p}}{\in} {\sf Meas}^+({\rm clos\,} S)\subset {\sf Meas}_{{\rm cmp}}^+
	\end{equation} 
of $d$. If, in addition to \eqref{p}, we have $v\geq d$ on ${\mathbb{C}}$ {\rm (cf.  \eqref{pmu0+}),} then $\omega$ is a ${\sf lnmon}_p$-balayage of $\delta$.
\end{theorem}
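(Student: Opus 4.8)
The plan is to treat the two directions separately, using potentials as the dictionary between measures and subharmonic functions, together with Proposition~\ref{pr2} which already packages the relationship between ${\sf mon}_p$-balayage and the Weierstrass\,--\,Hadamard kernel $k_{\lfloor p\rfloor}$. For the \emph{direct} part, assume $\omega$ is a ${\sf mon}_p$-balayage of $\delta$ with both measures in ${\sf Meas}_{{\rm cmp}}^+$. The assertions \eqref{{pmu0}p} are immediate from Definition~\ref{df:pot}: a compactly supported positive measure has logarithmic potential in ${\sf sbh}_*$, harmonic off the support. For \eqref{{pmu0}o}, I would fix $r_0$ so large that ${\rm supp\,}\delta\cup{\rm supp\,}\omega\subset r_0{\mathbb D}$ and apply \eqref{kpmt} of Proposition~\ref{pr2} with the charge $\omega-\delta$: for $|w|>r_0$ one has ${\sf pt}_{\omega-\delta}(w)=\int k_{\lfloor p\rfloor}(w,z){\,{\rm d}}(\omega-\delta)(z)$, and since by statement [b\ref{b3}] the moments $\int z^k{\,{\rm d}}\delta=\int z^k{\,{\rm d}}\omega$ agree for $k\le p$, the Taylor expansion \eqref{{kern:WA}i} of $k_{\lfloor p\rfloor}(w,\cdot)$ in powers of $z/w$ has all terms up to order $\lfloor p\rfloor$ killed, leaving a remainder of size $O(|w|^{-\lfloor p\rfloor-1})$ uniformly for $z$ in the (compact) support; integrating against the finite measure $|\omega-\delta|$ gives \eqref{{pmu0}o}. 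When $\omega$ is moreover a ${\sf lnmon}_p$-balayage, inequality \eqref{pmu0+} follows from Definition~\ref{Def2}\,\eqref{balnumu} applied to the functions $h=\ln|\cdot-w|\in\ln_{\mathbb C}\subset{\sf lnmon}_p$, which says exactly ${\sf pt}_\delta(w)\le{\sf pt}_\omega(w)$ for every $w$.

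For the \emph{converse}, start from $S\Subset{\mathbb C}$, functions $d,v\in{\sf sbh}\cap{\sf har}({\mathbb C}\setminus S)$ satisfying \eqref{{pmu0+}o}, and set $\delta:=\varDelta_d$, $\omega:=\varDelta_v$; these are compactly supported positive measures carried by ${\rm clos\,}S$. The key is to compare $d$ and $v$ with the potentials of their Riesz measures via the Riesz decomposition: on any large disk $\overline D(R)\supset{\rm clos\,}S$ we have $d={\sf pt}_\delta+g_\delta$ and $v={\sf pt}_\omega+g_\omega$ with $g_\delta,g_\omega$ harmonic on a neighbourhood of $\overline D(R)$. Because $d,v$ are harmonic outside $S$ and globally differ by $O(|w|^{-\lfloor p\rfloor-1})$, the harmonic function $v-d$ extends across ${\mathbb C}\setminus S$ and tends to $0$ at $\infty$, hence (being harmonic on all of ${\mathbb C}\setminus S$ with a removable behaviour at $\infty$, or rather: ${\sf pt}_\omega-{\sf pt}_\delta$ is harmonic off ${\rm clos\,}S$ and $\to$ a constant which must be $0$ since total masses match by the $\log|w|$-coefficient) we get that $g_\omega-g_\delta$ is harmonic and bounded on ${\mathbb C}$, thus constant, and the decay forces this constant to be $0$. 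So $v-d={\sf pt}_\omega-{\sf pt}_\delta+({\rm const})$, and matching the expansion at $\infty$ pins the constant; in particular the moments of $\omega$ and $\delta$ up to order $\lfloor p\rfloor$ coincide, because the coefficient of $w^{-k}$ in the expansion of ${\sf pt}_\omega-{\sf pt}_\delta$ is (a multiple of) $\int z^k{\,{\rm d}}(\omega-\delta)$, and \eqref{{pmu0+}o} says these vanish for $k\le\lfloor p\rfloor$. By the equivalence [b\ref{b3}]\,(\ref{b3i})$\Leftrightarrow$(\ref{b3ii}) in Proposition~\ref{pr:bal}, $\omega$ is then a ${\sf mon}_p$-balayage of $\delta$. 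Finally, if in addition $v\ge d$ on ${\mathbb C}$, then ${\sf pt}_\omega\ge{\sf pt}_\delta$ everywhere (the harmonic correction terms having been shown equal), i.e. $\int\ln|z-w|{\,{\rm d}}\omega(z)\ge\int\ln|z-w|{\,{\rm d}}\delta(z)$ for all $w$; combined with the already-established equality on ${\sf mon}_p$ this is precisely the ${\sf lnmon}_p$-balayage property.

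The main obstacle I anticipate is the rigorous bookkeeping at $\infty$ in the converse: one must justify that $v-d$, a priori only defined and harmonic on ${\mathbb C}\setminus S$, together with the global estimate \eqref{{pmu0+}o}, actually coincides with ${\sf pt}_\omega-{\sf pt}_\delta$ plus a constant \emph{on all of} ${\mathbb C}$ — i.e. that no extra entire harmonic (hence polynomial, by growth) term sneaks in. The clean way is to note $w\mapsto v(w)-d(w)-{\sf pt}_\omega(w)+{\sf pt}_\delta(w)$ is harmonic on ${\mathbb C}\setminus S$ \emph{and} on a neighbourhood of each point of $S$ it is the difference of two subharmonic functions with the same Riesz measure, hence harmonic there too; so it is entire harmonic, and by \eqref{{pmu0+}o} together with the known logarithmic growth of potentials it is $o(|w|)$, forcing it to be constant. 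Establishing the matching-moments consequence then reduces to expanding ${\sf pt}_{\omega-\delta}(w)=-\sum_{k\ge1}\frac1k\bigl(\int z^k{\,{\rm d}}(\omega-\delta)\bigr)w^{-k}$ for large $|w|$ and reading off coefficients, which is the routine part. I would also remark that the required integrability hypothesis ${\sf N}^\star_{\delta+\omega}(1,+\infty;t\mapsto t^{\lfloor p\rfloor})<+\infty$ needed to invoke Proposition~\ref{pr2} is automatic here since $\delta,\omega$ have compact support.
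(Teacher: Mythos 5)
Your proposal is correct and follows essentially the same route as the paper: the direct part via Proposition~\ref{pr2} and the Taylor expansion of $k_{\lfloor p\rfloor}(w,\cdot)$ at $\infty$, and the converse by reducing to $d={\sf pt}_\delta$, identifying $v$ with ${\sf pt}_\omega$ up to a constant killed by comparing the $\log|w|$ coefficients, and then reading the vanishing of the moments $\int z^k{\,{\rm d}}(\omega-\delta)$ off the decay ${\sf pt}_{\omega-\delta}(w)=O\bigl(|w|^{-\lfloor p\rfloor-1}\bigr)$. The one step you dismiss as ``routine coefficient-reading'' is exactly where the paper does its only nontrivial work: the expansion controls only ${\rm Re}\sum_{k=1}^{\lfloor p\rfloor}q_kw^{-k}$, not the holomorphic sum itself, so deducing $q_k=0$ requires either the Borel--Carath\'eodory inequality (Proposition~\ref{prBC}, the paper's choice) or an equally short Fourier argument (integrate against $e^{ik\theta}$ on $|w|=r$ and let $r\to\infty$); you should supply one of these rather than leave it implicit.
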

\begin{proof}  The first  property \eqref{{pmu0}p} for potentials ${\sf pt}_\delta$  and ${\sf pt}_\omega$ with compact supports 
${\rm supp}\, \delta, {\rm supp\,}  \omega\Subset {\mathbb{C}}$  is  obvious. 
Let's prove property  \eqref{{pmu0}o}.  By Proposition \ref{pr2} we have \eqref{kpmt}.
If \begin{equation}\label{w}
|w|\overset{\eqref{kern:WA}}{>}
R_0:=\max \{r_0, 2s\},  \text{ where $s:=\sup\bigl\{|z|\colon z\in {\rm supp\,}  (\delta+\omega) \bigr\}$},
\end{equation} 
then  $w\overset{\eqref{Dom}}{\in} {\sf Dom}\, {\sf pt}_{\omega-\delta}$, and 
we can use the Taylor series expansion for the integrand expression  $k_{\lfloor p\rfloor}$ of integral from the left-hand side of \eqref{kpmt} in the form 
\begin{multline*}
\bigl|{\sf pt}_{\omega-\delta}(w)\bigr|\overset{\eqref{kpmt}}{=}\Bigl|\int_{{\mathbb{C}}} k_{\lfloor p\rfloor} (w,z){\,{\rm d}} (\omega-\delta)(z) \Bigr|\overset{\eqref{{kern:WA}i}}{=}\biggl| \int \biggl(\sum_{k=\lfloor p\rfloor+1}^{\infty} -{\rm Re} \frac{z^k}{kw^k}\biggr) {\,{\rm d}} (\omega-\delta)(z) \biggr|
\\ \leq  \int \sum_{k=\lfloor p\rfloor+1}^{\infty} \Bigl|-{\rm Re} \frac{z^k}{kw^k}\Bigr| {\,{\rm d}} (\omega+\delta)(z) 
\overset{\eqref{w}}{\leq} \int_{\overline D(s)} \frac{|z|^{\lfloor p\rfloor+1}}{|w|^{\lfloor p\rfloor+1}}
\sum_{m=0}^{\infty} \Bigl|\frac{z}{w}\Bigr|^m{\,{\rm d}} (\omega+\delta)(z)
\\
\overset{\eqref{w}}{\leq} \frac{2}{|w|^{\lfloor p\rfloor+1}}\int_{\overline D(s)}
|z|^{\lfloor p\rfloor+1}{\,{\rm d}} (\omega+\delta)(z) 
=O\Bigl(\frac{1}{|w|^{\lfloor p\rfloor+1}}\Bigr)\quad \text{as }w\to\infty.
\end{multline*}
The latter means \eqref{{pmu0}o}. Finally, if $\omega$ is a $\ln_{{\mathbb{C}}}$-balayage of $\delta$, then by inequalities \eqref{balnumu} for $h\in \ln_{{\mathbb{C}}}$ in Definition \ref{Def2} we obtain ${\sf pt}_{\delta}\leq {\sf pt}_{\omega}$ on ${\mathbb{C}}$.

\underline{Conversely,} \eqref{{pmu0+}p} implies \eqref{mu} and \eqref{d}. 
First, we prove that $v$ is the  logarithmic potential ${\sf pt}_{\omega}$ under the assumption that the function $d$ is the logarithmic potential
\begin{equation}\label{pd}
d(w)\underset{\text{\tiny $w\in {\mathbb{C}}$}}{\equiv} {\sf pt}_{\delta} (w)=O\big(\ln |w|\big)\quad \text{as $w\to \infty$.}
\end{equation}
 It follows from \eqref{pd} and   \eqref{{pmu0+}o} that  $v(w)=O\big(\ln |w|\big)$ as $w\to \infty$. By the Weierstrass\,--\,Hadamard representation theorem, such subharmonic functions with the Riesz measure $\omega\in {\sf Meas}^+$ can be represented in the form $v:={\sf pt}_{\omega}+C$, where $C$ is a constant. 
By \cite[Theorem 3.1.2]{R},  we obtain
\begin{equation}\label{pt2}
\begin{cases}
{\sf pt}_{\omega}(w)=\omega({\mathbb{C}})\log |w|+O\bigl(1/|w|\bigr),\\
{\sf pt}_{\delta}(w)=\delta ({\mathbb{C}})\log |w|+O\bigl(1/|w|\bigr)
\end{cases}
\text{ as }w\to \infty.
\end{equation}
Hence we have
\begin{multline*}
C\underset{\text{\tiny $w\in {\mathbb{C}}$}}{\equiv} v(w)-{\sf pt}_{\omega}(w)\underset{\text{\tiny $w\to \infty$}}{\overset{\eqref{{pmu0+}o}}{=}}{\sf pt}_{\delta}(w)+O\Bigl(\frac{1}{|w|^{\lfloor p\rfloor+1}}\Bigr) -{\sf pt}_{\omega}(w)\\
\underset{\text{\tiny $w\to \infty$}}{\overset{\eqref{pt2}}{=}}\bigl(\delta ({\mathbb{C}})-\omega({\mathbb{C}})\bigr)
\ln |w|+O\Bigl(\frac{1}{|w|^{\lfloor p\rfloor+1}}\Bigr)+O\Bigl(\frac1{|w|}\Bigr)\quad \text{ as }w\to \infty. 
\end{multline*}
This is only possible if $\delta ({\mathbb{C}})-\omega({\mathbb{C}})=0$ and $C=0$. Thus, we have
\begin{equation}\label{Ow}
v={\sf pt}_{\omega}, \quad \omega({\mathbb{C}})=\delta ({\mathbb{C}}), \quad 
{\sf pt}_{\omega-\delta}(w)\underset{\text{\tiny $w\to \infty$}}{\overset{\eqref{{pmu0+}o}}{=}}O\Bigl(\frac{1}{|w|^{\lfloor p\rfloor+1}}\Bigr).
\end{equation}
In particular, from here we have
\begin{equation}\label{1w}
{\sf pt}_{\omega-\delta}(z)= \int_{{\mathbb{C}}} \ln \Bigl| 1-\frac{z}{w}\Bigr| {\,{\rm d}} (\omega-\delta)(z)
\quad\text{for each $w\overset{\eqref{Dom}}{\in} {\sf Dom}\, {\sf pt}_{\omega-\delta}$,}
\end{equation}
and, by Proposition \ref{pr:bal}[b\ref{b2}],   $\omega$ is a ${\sf mon}_p$-balayage of $\delta$
in the case $p<1$. Let's prove  that this measure $\omega\in {\sf Meas}_{{\rm cmp}}^+$ is a ${\sf mon}_p$-balayage of $\delta$
in the case $p\geq 1$. 
By expanding in the Taylor series of the corresponding analytic branch of the function $z\underset{\text{\tiny $z\in {\mathbb{D}}$}}{\longmapsto} \ln (1-z)$, 
we obtain the following representation 
\begin{subequations}\label{ptp}
\begin{align}
{\sf pt}_{\omega-\delta}(w)\overset{\eqref{1w}}{=}\int_{\overline D(s)} \sum_{k=1}^{\lfloor p\rfloor}{\rm Re} \frac{-z^k}{kw^k}
{\,{\rm d}} (\omega-\delta)(z)  
= {\tt Q}_{\lfloor p \rfloor}(w)+ {\tt R}_{\lfloor p \rfloor} (w) 
\tag{\ref{ptp}p}\label{{ptp}p}
\\
\text{if $w\overset{\eqref{w}}{\in} \complement D(R_0)$, where ${\rm supp\,} (\delta+\omega)\overset{\eqref{w}}{\subset} \overline D(s)$, i.e., } \Bigl|\frac{z}{w}\Bigr|\leq \frac{1}{2},
\tag{\ref{ptp}{\it w}}\label{{ptp}w}\\
{\tt Q}_{\lfloor p \rfloor}(w)\underset{\text{\tiny $|w|\geq R_0$}}{\equiv} {\rm Re} \sum_{k=1}^{\lfloor p\rfloor}\frac{q_k}{w^k} \quad 
\text{with } q_k:=-\frac{1}{k}\int_{\overline D(s)} z^k {\,{\rm d}} (\omega-\delta)(z)
\tag{\ref{ptp}{\tt Q}}\label{{ptp}Q}
\end{align}
\end{subequations}
is harmonic rational function on $\mathbb C_{\infty}\!\setminus\!0$ with ${\tt Q}_{\lfloor p \rfloor}(\infty)=0$,  and, for $|w|\overset{\eqref{{ptp}w}}{\geq} R_0$,  
\begin{equation*}
 \bigl|{\tt R}_{\lfloor p \rfloor} (w)\bigr|=\biggl|\int_{\overline D(s)} \sum_{\lfloor p\rfloor+1}^{\infty}{\rm Re} \frac{-z^k}{kw^k}
{\,{\rm d}} (\omega-\delta)(z) \biggr|\overset{\eqref{{ptp}w}}{\leq} \frac{2s^{\lfloor p\rfloor+1}}{|w|^{\lfloor p\rfloor+1}}
(\omega+\delta)({\mathbb{C}}).
 \end{equation*}
Hence, in view of \eqref{Ow} and \eqref{ptp}, we have
\begin{equation}\label{tQ}
{\tt Q}_{\lfloor p \rfloor}(w)\overset{\eqref{{ptp}Q}}{=}
{\rm Re} \sum_{k=1}^{\lfloor p\rfloor}\frac{q_k}{w^k}\overset{\eqref{{ptp}p}}{=}
O\Bigl(\frac{1}{|w|^{\lfloor p\rfloor+1}}\Bigr)\quad\text{as }w\to \infty.
\end{equation}
Therefore,  for complex polynomial
\begin{equation}\label{Q}
Q(z)\underset{\text{\tiny $z\in {\mathbb{C}}$}}{\equiv}\sum_{k=1}^{\lfloor p\rfloor}q_k z^k, \quad Q(0)=0, \quad \deg Q\leq \lfloor p\rfloor,
\end{equation}
we get  ${\rm Re}\,  Q(z)\overset{\eqref{tQ}}{=} O\bigl(|z|^{\lfloor p\rfloor+1}\bigr)$ as $z\to 0$. Hence, by Proposition \ref{prBC}, 
we have
\begin{equation*}
{\sf M}_{|Q|}(r)\leq 2{\sf M}_{{\rm Re}\,Q}(2r)+3\bigl|Q(0)\bigr|\overset{\eqref{Q}}{=}2{\sf M}_{{\rm Re}\,Q}(2r)\overset{\eqref{Q}}{=}O\bigl(|z|^{\lfloor p\rfloor+1}\bigr)\quad\text{as $z\to 0$.}
\end{equation*}
Thus, this  polynomial $Q$ of degree $\deg Q\overset{\eqref{Q}}{\leq} \lfloor p\rfloor$ has a root of multiplicity at least $\lfloor p\rfloor+1$ at $0$. Therefore, $Q=0$, and $q_k=0$ for each $k=1,\dots \lfloor p\rfloor$.  By definition 
\eqref{{ptp}Q} of $q_k$ and by Proposition  \ref{pr:bal}[b\ref{b3}],  the measure $\omega$ is a ${\sf mon}_p$-balayage of $\delta$. Evidently,  the inequality ${\sf pt}_{\omega}=p\geq {\sf pt}_{\delta}$ on ${\mathbb{C}}$ means that $\omega$ is also a $\ln_{{\mathbb{C}}}$-balayage of $\delta$. Thus, the second final part of our Theorem is proved under assumption \eqref{pd}.

If   $d\overset{\eqref{{pmu0+}p}}{\in} {\sf sbh} \cap  {\sf har} ({\mathbb{C}}\!\setminus\! S)$  is 
an arbitrary function with the Riesz measure $\delta$ from \eqref{d}, then, by the Weierstrass\,--\,Hadamard representation theorem,  $d$  admits representation
$d\overset{\eqref{d}}{=}{\sf pt}_{\delta}+h_d$, where $h_d$ is harmonic on ${\mathbb{C}}$. Consider function $\tilde d:={\sf pt}_{\delta}$ instead of function $d$, and function $\tilde v:=v-h_d$ instead of function $v$.  
For this pair of functions, we have $\delta =\varDelta_{\tilde d}$ and  $\omega=\varDelta_{\tilde v}$, 
and also 
\begin{equation*}
\tilde v(w)=v(w)-h_d(w) \underset{\text{\tiny $w\to \infty$}}{\overset{\eqref{{pmu0+}o}}{=}}
	 d(w)-h_d(w)+O\Bigl(\frac{1}{|w|^{\lfloor p\rfloor+1}}\Bigr)
= \tilde d(w)+O\Bigl(\frac{1}{|w|^{\lfloor p\rfloor+1}}\Bigr) \quad\text{as $w\to\infty$,}  
\end{equation*}
 but assumption  \eqref{pd} for the function $\tilde d$ instead of $d$ is already fulfilled.
\end{proof}

\section{Balayage with respect to  subharmonic functions of finite order}

\begin{definition}\label{defot}
	For  a function $f\colon [r,+\infty)\to {\overline {\mathbb{R}}}$ and $f^+:=\sup \{0,f\}$,  values 
{\rm (see \cite[\S~2]{KhaSch19})}
	\begin{subequations}\label{order+}
		\begin{align}
		{\sf ord}[f]&:= \limsup_{x\to +\infty} \frac{\ln (1+f^+(x))}{\ln x}\in {\mathbb{R}}^+\cup {+\infty},
		\tag{\ref{order+}o}\label{k{order+}o}
		\\
		{\sf type}_{p}[f]&:= 
		\limsup_{x\to +\infty} \frac{f(x)}{x^{p}}\in {\mathbb{R}}\cup {+\infty}
		\tag{\ref{order+}t}\label{k{order+}t}
		\end{align}
	\end{subequations}  
	are the \textit{order\/} of this function  $f$, and the \textit{type\/} of this function $f$  {\it under order\/} $p$, respectively.
\end{definition}

Using Definition \ref{defot}, for  $\nu \in {\sf Meas}$, we define 
\begin{subequations}\label{nuot}
	\begin{align}
	{\sf ord}[\nu]&\overset{\eqref{k{order+}o}}{:=}{\sf ord}\bigl[|\nu|^{{\text{\tiny\rm rad}}}\bigr], 
	\tag{\ref{nuot}o}\label{{nuot}o}
	\\
	{\sf type}_{p}[\nu]&\overset{\eqref{k{order+}t}}{:=}{\sf type}_{p}\bigl[|\nu|^{{\text{\tiny\rm rad}}}\bigr] \quad \text{if }
	{\sf ord}[\nu]\leq p \in {\mathbb{R}}^+.
	\tag{\ref{nuot}t}\label{{nuot}t}
	\end{align}
\end{subequations}

\begin{propos}\label{WA} 
	Let  $p\in {\mathbb{R}}^+$, and let    $\nu \in {\sf Meas}^+$ be a measure  with 
	\begin{equation}\label{tp}
	{\sf type}_p[\nu]\overset{\eqref{{nuot}t}}{<}+\infty
	\quad\text{under order  $p\overset{\eqref{{nuot}o}}{:=}{\sf ord}[\nu]\in {\mathbb{R}}^+$,}
\end{equation} 
	and  a finite measure $\omega \in {\sf Meas}^+$ satisfies the condition
\begin{equation}\label{imonl}
\left[
\begin{array}{cc}
{\sf N}^{\star}_{\nu}(1,+\infty; t\mapsto t^p)\overset{\eqref{{mB}star}}{<}+\infty &\text{when  $p\in {\mathbb{R}}^+\!\setminus\!{\mathbb{N}}_0$}, \\
{\sf N}^{\star}_{\nu}(1,+\infty; t\mapsto t^p\ln t)\overset{\eqref{{mB}star}}{<}+\infty &\text{when $p\in {\mathbb{N}}_0$}.
\end{array}
\right.
\end{equation}
Then there exist the following two equal repeated integrals
\begin{equation}\label{I}
-\infty \leq	\int \left(\int k_{\lfloor p \rfloor}(w,z) {\,{\rm d}} \nu (w) \right){\,{\rm d}} \omega (z)
	=\int \left(\int k_{\lfloor p \rfloor}(w,z) {\,{\rm d}} \omega (z) \right){\,{\rm d}} \nu (w)  <+\infty.
	\end{equation}
\end{propos}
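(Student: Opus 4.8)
The plan is to establish \eqref{I} as an application of Tonelli's theorem for the double integral of the Weierstrass--Hadamard kernel $k_{\lfloor p\rfloor}$ against the product measure $\nu\otimes\omega$, after first arranging that the kernel is bounded below by an integrable majorant. Concretely, I would first fix $r_0$ in the definition \eqref{kern:WA} and split both outer integrals according to the two regimes $|w|<r_0$ and $|w|\ge r_0$ (and similarly for $z$ when needed). On the region where $|w|<r_0$ the kernel is just $\log|w-z|$, and on the complementary region it is the shifted logarithm plus the polynomial correction $\sum_{k=1}^{\lfloor p\rfloor}{\rm Re}(z^k/(kw^k))$; in either case $k_{\lfloor p\rfloor}(w,z)$ is bounded above by ${\rm const}+{\rm const}\cdot\log(2+|z|)+{\rm const}\cdot|z|^{\lfloor p\rfloor}$ uniformly in $w$ on bounded sets, and the negative part $k_{\lfloor p\rfloor}^-$ is controlled near the diagonal by $(\log|w-z|)^-$, which is locally integrable.

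Next I would verify the one-sided finiteness on the right. Since $\omega$ is a finite measure with compact support (it is a finite positive measure; if needed, restrict attention to where the inner integral is finite using \eqref{Dom}), integrating the upper bound for $k_{\lfloor p\rfloor}(w,z)$ in $z$ against $\omega$ produces a function of $w$ dominated by ${\rm const}+{\rm const}\cdot|w|^{\lfloor p\rfloor}$ (for $|w|$ large, using the Taylor expansion as in the proof of Theorem~\ref{DT1}), and then integrating this in $w$ against $\nu$ is finite precisely because of hypothesis \eqref{imonl}: the condition ${\sf N}^{\star}_{\nu}(1,+\infty;t\mapsto t^p)<+\infty$ (or with the extra $\ln t$ when $p\in{\mathbb N}_0$) is exactly what makes $\int |w|^{\lfloor p\rfloor}$-type growth against $\nu$ converge, by the standard integration-by-parts identity relating $\int t^{\lfloor p\rfloor}\,{\rm d}\nu^{{\text{\tiny\rm rad}}}$ to ${\sf N}^{\star}_{\nu}$ and to ${\sf type}_p[\nu]<+\infty$ under order $p={\sf ord}[\nu]$. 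This gives that one of the iterated integrals of $k_{\lfloor p\rfloor}^+$ is finite; combined with local integrability of $k_{\lfloor p\rfloor}^-$ against the (compactly supported, finite) product measure near the diagonal, the function $k_{\lfloor p\rfloor}$ is $\nu\otimes\omega$-quasi-integrable, so Tonelli/Fubini applies and the two iterated integrals coincide and lie in $[-\infty,+\infty)$.

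The main obstacle I expect is the bookkeeping at the threshold case $p\in{\mathbb N}_0$, where the borderline growth $t^p$ is not integrable against $\nu$ on its own and one genuinely needs the logarithmic refinement ${\sf N}^{\star}_{\nu}(1,+\infty;t\mapsto t^p\ln t)<+\infty$ together with $\lfloor p\rfloor=p$; here the $O(1/|w|^{\lfloor p\rfloor+1})$ tail of the kernel from \eqref{{kern:WA}i} must be matched carefully against the divergence rate of $\nu^{{\text{\tiny\rm rad}}}$, and one must be careful that ${\sf ord}[\nu]=p$ with finite type does not by itself guarantee \eqref{imonl} — that is why \eqref{imonl} is imposed as a separate hypothesis. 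A secondary, more routine point is handling the diagonal $\{w=z\}$: since $\nu$ may have atoms, I would note that $\{(w,w)\}$ can carry positive $\nu\otimes\omega$-mass only if $\nu$ and $\omega$ share atoms, in which case $k_{\lfloor p\rfloor}=-\infty$ there and the common value in \eqref{I} is simply $-\infty$, consistent with the stated inequality; otherwise the diagonal is $\nu\otimes\omega$-null and causes no trouble.
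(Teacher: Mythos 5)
Your overall strategy (majorize $k_{\lfloor p \rfloor}$ by a positive Borel kernel, apply Tonelli to the majorant, then Fubini to $k_{\lfloor p \rfloor}$ itself) is the same as the paper's, but the execution has a genuine gap in where the two hypotheses are made to act. You assume that $\omega$ has compact support; this is not a hypothesis of the proposition --- $\omega$ is only a finite measure on ${\mathbb{C}}$ --- and if it were, condition \eqref{imonl} would be vacuous, since it is a tail condition on $\omega^{{\text{\tiny\rm rad}}}$ (the subscript $\nu$ in \eqref{imonl} is a misprint for $\omega$: the statement says $\omega$ satisfies it, and the equivalent form \eqref{imonl1} is written with $\omega^{{\text{\tiny\rm rad}}}$). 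This wrong assumption propagates. Your inner integration is in $z$ against $\omega$, and the resulting bound ``${\rm const}+{\rm const}\cdot|w|^{\lfloor p\rfloor}$'' is both unjustified without compact support and, even if granted, not integrable against $\nu$: since $\nu$ has order $p$ and is in general an infinite measure with $\nu^{{\text{\tiny\rm rad}}}(t)$ growing like $t^{p}$, one has $\int |w|^{\lfloor p\rfloor}\,{\rm d}\nu(w)=+\infty$, so your outer integration does not close. Likewise your ``uniform in $w$ on bounded sets'' upper bound for the kernel is of no use for the $w$-integration, because $\nu$ is not confined to a bounded set.

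The paper runs the iterated integral of the majorant in the other order, and this is not a cosmetic choice. One first integrates $K_{\lfloor p \rfloor}(w,z)$ in $w$ against $\nu$; the decay of $K_{\lfloor p \rfloor}$ in $|w|$ from estimate \eqref{kq} together with ${\sf type}_p[\nu]<+\infty$ yields the growth bound \eqref{I1} in $z$, namely ${\rm const}^+_{\nu,p}(1+|z|^p)$ with an extra $\log(2+|z|)$ factor when $p\in{\mathbb{N}}_0$ --- this is your ``threshold case,'' and it is absorbed at the $\nu$-stage, not at the $\omega$-stage. Only then does one integrate in $z$ against $\omega$, and \eqref{imonl}$\Leftrightarrow$\eqref{imonl1} is precisely the integrability of that bound against $\omega$. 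To repair your argument you would have to either adopt this order of integration or prove an honest estimate of $\int K_{\lfloor p \rfloor}(w,\cdot)\,{\rm d}\omega$ valid without compactness of ${\rm supp\,}\omega$; as written, your chain of estimates does not establish finiteness of either iterated integral of the positive majorant, which is the whole content of the Tonelli step. Your closing observations about the diagonal and about $-\infty$ values are fine but peripheral.
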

\begin{remark} Condition \eqref{imonl} can be written in the equivalent form as
		\begin{equation}\label{imonl1}
	\int_1^{+\infty} \frac{ \omega^{{\text{\tiny\rm rad}}} (+\infty)-\omega^{{\text{\tiny\rm rad}}} (t)}{t}t^p 
\ln^{1+\lfloor p\rfloor -\lceil p\rceil} t {\,{\rm d}} t<+\infty, 
	\end{equation}
since, obviously, 
\begin{equation}\label{pp}
 1+\lfloor p\rfloor -\lceil p\rceil =
\begin{cases}
1 &\text{if } p\in {\mathbb{Z}}, \\ 
 0 &\text{if } p\in {\mathbb{R}}\!\setminus\! {\mathbb{Z}}.
\end{cases}
\end{equation} 
\end{remark}
\begin{proof} Standard classical estimates of  the  Weierstrass\,--\,Hadamard kernel 
	$k_{\lfloor p \rfloor}$ of genus $q$ from \eqref{kern:WA} give \cite[4.1.1]{HK}
	\begin{equation}\label{kq}
	k_{\lfloor p \rfloor}(w,z)\leq K_{\lfloor p \rfloor}(w,z)
	:={\rm const}_p^+ 
\begin{cases}
	\ln \bigl(1+|z|\bigr) &\text{ if $|w|<1$},\\
	\ln \bigl(1+|z|/|w|\bigr)&\text{ if $|w|\geq 1$ and  $\lfloor p\rfloor=0$},\\
	\bigl(|z|/|w|\bigr)^{\lfloor p\rfloor} \min \bigl\{ 1, |z|/|w|\bigr\}
	&\text{ if $|w|\geq 1$ and  $\lfloor p\rfloor\in{\mathbb{N}}$},
	\end{cases}
	\end{equation}
		The function $K_{\lfloor p \rfloor}$ is positive and Borel-measurable on ${\mathbb{C}}^2
	$. Using these estimates and the condition \eqref{tp}  we estimate, as in 
	\cite[Lemma 4.4]{HK},  the integral
	\begin{equation}\label{I1}
	\int K_{\lfloor p \rfloor}(w,z) {\,{\rm d}} \nu (w)
	\leq {\rm const}_{\nu, p}^+\cdot 
	\begin{cases}
	1+|z|^p\quad &\text{if  $p\notin {\mathbb{N}}_0$,}\\
	(1+|z|^p)\log (2+|z|)\quad &\text{if  $p\in {\mathbb{N}}_0$.}\\
	\end{cases}
	\end{equation}
	Hence, in view of \eqref{imonl}$\Leftrightarrow$\eqref{imonl1}, there exists the following repeated  integral 
	\begin{equation*}
	\int \left(\int K_{\lfloor p \rfloor}(w,z) {\,{\rm d}} \nu (w) \right){\,{\rm d}} \omega (z)<+\infty.
	\end{equation*}
	By the Fubini\,--\,Tonelli Theorem \cite[Ch.~V, \S~8, 1, Scholium]{Bourbaki} there are  two equal repeated integrals  
	\begin{equation*}
	\int \left(\int K_{\lfloor p \rfloor}(w,z) {\,{\rm d}} \nu (w) \right){\,{\rm d}} \omega (z)=
	\int \left(\int K_{\lfloor p \rfloor}(w,z) {\,{\rm d}} \omega (z) \right){\,{\rm d}} \nu (w)\in {\mathbb{R}}. 
	\end{equation*}
	Hence, in view of $k_{\lfloor p \rfloor}\overset{\eqref{kq}}{\leq} K_{\lfloor p \rfloor}$ on ${\mathbb{C}}^2$, by Fubini's Theorem \cite[Theorem 3.5]{HK} the integrals in  \eqref{I} exist and coincide
	with possible value of $-\infty$. 
\end{proof}

For $r\in {\mathbb{R}}^+$ and  $u\colon \partial \overline D(z, r)\to {\overline {\mathbb{R}}}$ we define
the integral average value of $u$ on the circle $\partial \overline D(z, r)$
\begin{subequations}\label{df:MCB}
	\begin{align} 
	{\sf C}_u(z,r)&:=\frac{1}{2\pi} \int_{0}^{2\pi}  u(z+re^{is}) {\,{\rm d}} s, \quad  {\sf C}_u^{{\text{\tiny\rm rad}}}(r):= {\sf C}_u(0,r),
	\tag{\ref{df:MCB}C}\label{df:MCBC}\\
	\intertext{and  the integral average value of   $u\colon \overline D(z,r)\to {\overline {\mathbb{R}}}$ on the disk  $\overline D(z, r)$}
	{\sf B}_u(z,r)&:=	\frac{2}{r^2}\int_0^r {\sf C}_u(z,t) t{\,{\rm d}} t
	, \quad {\sf B}_u^{{\text{\tiny\rm rad}}}(r):={\sf B}_u(0,r).
	\tag{\ref{df:MCB}B}\label{df:MCBB}
	\end{align}
\end{subequations} 

\begin{propos}\label{pr:rep}
Let $p\in {\mathbb{R}}^+$. If 
\begin{equation}\label{up}
u\in {\sf sbh}_*,  \quad {\sf type}_{\lfloor p\rfloor+1}[u]\overset{\eqref{{fc}c}}{=}0, \quad {\sf type}_{p}[{\sf C}_u^{{\text{\tiny\rm rad}}}]<+\infty,
\end{equation}
then ${\sf type}_p[\varDelta_u]<+\infty$,   there is a polynomial $P\in {\sf Pol}_p$  such that
\begin{equation}\label{rWA}
u(z)=\underset{I_u(z)}{\underbrace{\int_{{\mathbb{C}}} k_{\lfloor p\rfloor} (w,z){\,{\rm d}} \varDelta_u(w)}}+{\rm Re} \,P(z) \quad \text{for each $z\in {\mathbb{C}}$}, 
\end{equation} 
and 
\begin{equation}\label{uO}
u(z)\overset{\eqref{pp}}{=}O\Bigl(|z|^p\ln^{1+\lfloor p\rfloor-\lceil p\rceil} |z|\Bigr)\quad \text{as $z\to \infty$.}
\end{equation}
\end{propos}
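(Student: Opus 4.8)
The plan is to carry out the classical Weierstrass\,--\,Hadamard construction adapted to the genus $\lfloor p\rfloor$, with the Jensen formula controlling the Riesz measure, the kernel bounds \eqref{kq} controlling the canonical part, and the Borel\,--\,Carath\'eodory inequality (Proposition \ref{prBC}) controlling the polynomial part. First I would prove ${\sf type}_p[\varDelta_u]<+\infty$: by the Jensen formula, for $0<r$,
\[
{\sf C}_u^{{\text{\tiny\rm rad}}}(2r)-{\sf C}_u^{{\text{\tiny\rm rad}}}(r)=\int_r^{2r}\frac{\varDelta_u^{{\text{\tiny\rm rad}}}(t)}{t}\,{\rm d}t\ge (\ln 2)\,\varDelta_u^{{\text{\tiny\rm rad}}}(r)
\]
because $t\mapsto\varDelta_u^{{\text{\tiny\rm rad}}}(t)$ is nondecreasing; since ${\sf C}_u^{{\text{\tiny\rm rad}}}$ is nondecreasing the left side is at most ${\sf C}_u^{{\text{\tiny\rm rad}}}(2r)+{\rm const}$, so the hypothesis ${\sf type}_p[{\sf C}_u^{{\text{\tiny\rm rad}}}]<+\infty$ in \eqref{up} yields $\varDelta_u^{{\text{\tiny\rm rad}}}(r)=O(r^p)$, i.e. ${\sf type}_p[\varDelta_u]<+\infty$, and in particular $\int_1^{+\infty}t^{-\lfloor p\rfloor-1}\,{\rm d}\varDelta_u^{{\text{\tiny\rm rad}}}(t)<+\infty$.

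Next I would form the integral $I_u(z)$ of \eqref{rWA}. The last bound shows it converges for every $z\in{\mathbb{C}}$ (it equals $-\infty$ only on a polar set, since near any point $k_{\lfloor p\rfloor}(w,\cdot)$ is $\log|w-\cdot|$ up to a continuous term), so $I_u\in{\sf sbh}_*$ with $\varDelta_{I_u}=\varDelta_u$, being an integral of the functions $k_{\lfloor p\rfloor}(w,\cdot)$ each with Riesz measure the Dirac mass at $w$; and, majorizing $k_{\lfloor p\rfloor}$ by $K_{\lfloor p\rfloor}$ from \eqref{kq} and estimating as in \eqref{I1}, one gets
\[
I_u(z)\le{\rm const}^+_{u,p}\bigl(1+|z|^p\ln^{1+\lfloor p\rfloor-\lceil p\rceil}|z|\bigr)\quad\text{as }z\to\infty.
\]
Since $\varDelta_{u-I_u}=0$, the function $u-I_u$ coincides with a harmonic function on ${\mathbb{C}}$ (Weyl's lemma), and I would write $u-I_u={\rm Re}\,F$ with $F$ entire. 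The Fubini-type verification that $I_u$ is subharmonic with Riesz measure $\varDelta_u$, and that the kernel estimate transfers to $I_u$, goes exactly as in the proof of Proposition \ref{WA}.

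To finish it suffices, since $\lfloor p\rfloor\le p$, to show $\deg F\le\lfloor p\rfloor$. Fixing $n\ge\lfloor p\rfloor+1$ and letting $a_n$ be the $n$-th Taylor coefficient of $F$, from $\frac12 a_n r^n=\frac1{2\pi}\int_0^{2\pi}{\rm Re}\,F(re^{is})e^{-ins}\,{\rm d}s$ and the mean-value identity ${\sf C}_{{\rm Re}\,F}^{{\text{\tiny\rm rad}}}(r)={\rm Re}\,F(0)$ one obtains
\[
\tfrac12|a_n|r^n\le{\sf C}_{|{\rm Re}\,F|}^{{\text{\tiny\rm rad}}}(r)=2\,{\sf C}_{({\rm Re}\,F)^+}^{{\text{\tiny\rm rad}}}(r)-{\rm Re}\,F(0).
\]
Using $({\rm Re}\,F)^+=(u-I_u)^+\le u^++(I_u)^-$ and $(I_u)^-=(I_u)^+-I_u$,
\[
{\sf C}_{({\rm Re}\,F)^+}^{{\text{\tiny\rm rad}}}(r)\le{\sf C}_{u^+}^{{\text{\tiny\rm rad}}}(r)+{\sf C}_{(I_u)^+}^{{\text{\tiny\rm rad}}}(r)-{\sf C}_{I_u}^{{\text{\tiny\rm rad}}}(r),
\]
where ${\sf C}_{u^+}^{{\text{\tiny\rm rad}}}(r)\le\bigl({\sf M}_u^{{\text{\tiny\rm rad}}}(r)\bigr)^+=o(r^{\lfloor p\rfloor+1})$ (this is ${\sf type}_{\lfloor p\rfloor+1}[u]=0$ from \eqref{up}), ${\sf C}_{(I_u)^+}^{{\text{\tiny\rm rad}}}(r)\le\bigl({\sf M}_{I_u}^{{\text{\tiny\rm rad}}}(r)\bigr)^+=O\bigl(r^p\ln^{1+\lfloor p\rfloor-\lceil p\rceil}r\bigr)=o(r^{\lfloor p\rfloor+1})$ by the previous paragraph, and $-{\sf C}_{I_u}^{{\text{\tiny\rm rad}}}(r)$ is bounded above for large $r$ since ${\sf C}_{I_u}^{{\text{\tiny\rm rad}}}$ is nondecreasing and finite ($I_u\in{\sf sbh}_*$). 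Hence $|a_n|r^n=o(r^{\lfloor p\rfloor+1})$, forcing $a_n=0$, so $F$ is a polynomial of degree $\le\lfloor p\rfloor$, i.e. $P:=F\in{\sf Pol}_p$, and \eqref{rWA} holds; finally \eqref{uO} follows from $u=I_u+{\rm Re}\,P$, the displayed bound on $I_u$, and ${\rm Re}\,P=O(|z|^{\lfloor p\rfloor})=O(|z|^p)$.

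I expect the main obstacle to be exactly this coefficient estimate: one must bound ${\rm Re}\,F=u-I_u$ although $u$ and $I_u$ individually may be $-\infty$ on polar sets, so no pointwise lower bound on $I_u$ is available. The remedy is to pass throughout to the circle averages ${\sf C}^{{\text{\tiny\rm rad}}}$, which for subharmonic functions are automatically finite and nondecreasing, and to split $({\rm Re}\,F)^+\le u^++(I_u)^-$, so that only the one-sided growth bounds ${\sf M}_u^{{\text{\tiny\rm rad}}}(r)=o(r^{\lfloor p\rfloor+1})$ and $I_u\le$ the $K_{\lfloor p\rfloor}$-integral enter. Everything else — Jensen's formula, the kernel estimates, and the separate claim ${\sf type}_p[\varDelta_u]<+\infty$ — is routine and uses only the two hypotheses of \eqref{up} in the obvious way.
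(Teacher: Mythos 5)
Your proof is correct and follows the paper's overall route: Jensen's formula turns the hypothesis ${\sf type}_{p}[{\sf C}_u^{{\text{\tiny\rm rad}}}]<+\infty$ into ${\sf type}_p[\varDelta_u]<+\infty$, this yields the Weierstrass--Hadamard canonical integral $I_u$ with the kernel bound of \eqref{kq}--\eqref{I1}, and the remainder $u-I_u$ is then shown to be the real part of a polynomial of degree at most $\lfloor p\rfloor$. The one genuinely different step is the last one. The paper obtains ${\sf type}_{\lfloor p\rfloor+1}[{\rm Re}\,P]=0$ by citing Azarin's theorems on types of differences of subharmonic functions and then applies its Borel--Carath\'eodory Proposition \ref{prBC} to conclude $\deg P\leq \lfloor p\rfloor$; you instead bound the Taylor coefficients of $F$ directly through the identity $\frac12 a_n r^n=\frac1{2\pi}\int_0^{2\pi}{\rm Re}\,F(re^{is})e^{-ins}{\,{\rm d}} s$ and the splitting $({\rm Re}\,F)^+\leq u^++(I_u)^-$ at the level of circle means. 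Your version is self-contained and squarely addresses the only delicate point (that $u$ and $I_u$ may be $-\infty$ on polar sets, so no pointwise control of the difference is available), which the paper disposes of by citation; the paper's version is shorter and reuses Proposition \ref{prBC}, which it needs anyway. Both are sound, and both deliver \eqref{uO} in the same way from the bound on $I_u$ plus ${\rm Re}\,P=O(|z|^{\lfloor p\rfloor})$.
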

\begin{proof}
By the Jensen\,--\,Privalov  formula \cite[Theorem 2.6.5.1]{Azarin} for subharmonic function $u$ on $\overline D(R)$,
${\sf N}^{{\text{\tiny\rm rad}}}_{\varDelta_u}(1,R)={\sf C}_u^{{\text{\tiny\rm rad}}}(R)-{\sf C}_u^{{\text{\tiny\rm rad}}}(1)$ for each $R>1$.
Hence, in view of  ${\sf type}_{p}[{\sf C}_u^{{\text{\tiny\rm rad}}}]\overset{\eqref{up}}{<}+\infty$,  we obtain
 ${\sf type}_p\bigl[{\sf N}^{{\text{\tiny\rm rad}}}_{\varDelta_u}(1,\cdot)\bigr]<+\infty$,  and, as an easy consequence, 
${\sf type}_p[\varDelta_u]<+\infty$. 
This gives the Weierstrass\,--\,Hadamard representation \cite{Arsove53p}, \cite{HK}, \cite{Azarin}
of the form \eqref{rWA}, but so far only with an entire function $P\neq 0$. 
The harmonic  function  ${\rm Re}\, P\overset{\eqref{rWA}}{:=} u-I_u$ is the difference between the  subharmonic function $u$ with ${\sf type}_{\lfloor p\rfloor+1}[u]=0$ and the canonical   Weierstrass\,--\,Hadamard  integral $I_u$ satisfying 
\begin{equation}\label{IuO}
I_u(z)\overset{\eqref{pp}}{=}O\Bigl(|z|^p\ln^{1+\lfloor p\rfloor-\lceil p\rceil} |z|\Bigr)\quad \text{as $z\to \infty$} 
\end{equation}
since ${\sf type}_p[\varDelta_u]<+\infty$ (see \eqref{kq}-\eqref{I1}). In particular, ${\sf type}_{\lfloor p\rfloor+1}[I_u]=0$.
Hence ${\sf type}_{\lfloor p\rfloor+1}[{\rm Re}\, P]={\sf type}_{\lfloor p\rfloor+1}[u-I_u] =0$ \cite[Theorems 2.9.3.2, 2.9.4.2]{Azarin}. Therefore ${\sf type}_{\lfloor p\rfloor+1}\bigl[{\sf M}_{|P|}\bigr]=0$ by Proposition \ref{prBC}. This is possible only if the entire function $P$
 is a polynomial of degree  $\deg P<\lfloor p\rfloor+1$, i.e.,  $P\in {\sf Pol}_p$, and 
 \eqref{uO} follows from \eqref{rWA} and \eqref{IuO}.
\end{proof}

\begin{theorem}\label{th:bu} Let $p\in {\mathbb{R}}^+$. If both  $\omega \in {\sf Meas}^+$ and $\delta\in {\sf Meas}^+$ satisfy \eqref{imonl}$\Leftrightarrow$\eqref{imonl1} for  $p\in {\mathbb{R}}^+$, and 
$\omega$ is a ${\sf mon}_p$-balayage of $\delta$, then for each function $u$  of \eqref{up}
 the integrals $\int_{\text{\tiny ${\mathbb{C}}$}} u{\,{\rm d}} \omega\in {\mathbb{R}}\cup -\infty$  
and $\int_{\text{\tiny ${\mathbb{C}}$}} u{\,{\rm d}} \delta \in {\mathbb{R}}\cup -\infty$
is well defined. If  $\omega$ is a ${\sf lnmon}_p$-balayage of $\delta$, then   
\begin{equation}\label{udo}
\int_{{\mathbb{C}}} u{\,{\rm d}} \delta\leq  \int_{{\mathbb{C}}} u{\,{\rm d}} \omega
\end{equation} 
for each $u$ of\/ \eqref{up},
i.e., $\omega$ is a balayage of $\delta$ with respect to the class of functions $u$ satisfying\/ \eqref{up}.
\end{theorem}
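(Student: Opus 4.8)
The plan is to split $u$, via its Weierstrass--Hadamard representation, into a canonical potential-type integral and a polynomial, and to treat the two summands separately using the two ingredients of ${\sf lnmon}_p$-balayage. Concretely, I would first fix $u$ satisfying \eqref{up} and invoke Proposition~\ref{pr:rep}: it gives ${\sf type}_p[\varDelta_u]<+\infty$ together with the decomposition $u={\rm Re}\,P+I_u$, where $P\in{\sf Pol}_p$ (so $\deg P\le\lfloor p\rfloor$) and $I_u(z):=\int_{{\mathbb{C}}}k_{\lfloor p\rfloor}(w,z){\,{\rm d}}\varDelta_u(w)$. The measures $\delta$ and $\omega$ are finite (implicit in \eqref{imonl1}), and \eqref{imonl1} also yields $\int_{{\mathbb{C}}}|z|^k{\,{\rm d}}\mu<+\infty$ for $0\le k\le\lfloor p\rfloor$ and $\mu\in\{\delta,\omega\}$ (a routine estimate, using $t^{k-1}\le t^{p-1}\ln^{1+\lfloor p\rfloor-\lceil p\rceil}t$ for large $t$); in particular $\int_{{\mathbb{C}}}{\rm Re}\,P{\,{\rm d}}\mu\in{\mathbb{R}}$.

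Next I would apply Proposition~\ref{WA} with $\nu:=\varDelta_u$, taking $\mu$ ($=\delta$, then $=\omega$) as the finite measure there; this is legitimate because ${\sf type}_p[\varDelta_u]<+\infty$ and $\mu$ satisfies \eqref{imonl}$\Leftrightarrow$\eqref{imonl1}, and because the estimates \eqref{kq}--\eqref{I1} behind that proof use only ${\sf type}_p[\varDelta_u]<+\infty$, with no relation required between ${\sf ord}[\varDelta_u]$ and $p$. The upshot is that $\int I_u{\,{\rm d}}\mu=\int\bigl(\int k_{\lfloor p\rfloor}(w,z){\,{\rm d}}\varDelta_u(w)\bigr){\,{\rm d}}\mu(z)$ is well defined with value in ${\mathbb{R}}\cup{-\infty}$, equals $\int\Phi_\mu{\,{\rm d}}\varDelta_u$ with $\Phi_\mu(w):=\int k_{\lfloor p\rfloor}(w,z){\,{\rm d}}\mu(z)$, and satisfies $\int I_u^+{\,{\rm d}}\mu<+\infty$ (since $k_{\lfloor p\rfloor}\le K_{\lfloor p\rfloor}$ with $K_{\lfloor p\rfloor}\ge0$ and $\int\bigl(\int K_{\lfloor p\rfloor}{\,{\rm d}}\varDelta_u\bigr){\,{\rm d}}\mu<+\infty$ by \eqref{I1} and \eqref{imonl1}). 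Together with $\int{\rm Re}\,P{\,{\rm d}}\mu\in{\mathbb{R}}$, this makes $\int u{\,{\rm d}}\mu=\int I_u{\,{\rm d}}\mu+\int{\rm Re}\,P{\,{\rm d}}\mu$ well defined in ${\mathbb{R}}\cup{-\infty}$, which is the first assertion.

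For the inequality I would add the hypothesis that $\omega$ is a ${\sf lnmon}_p$-balayage of $\delta$. Since ${\sf mon}_p\subset{\sf lnmon}_p$, $\omega$ is then also a ${\sf mon}_p$-balayage of $\delta$, so by Proposition~\ref{pr:bal}[b\ref{b3}] the moments of $\delta$ and $\omega$ up to degree $\lfloor p\rfloor$ coincide, whence $\int{\rm Re}\,P{\,{\rm d}}\delta=\int{\rm Re}\,P{\,{\rm d}}\omega$. On the other hand, Proposition~\ref{pr2}, inequality \eqref{kpmtl}, gives $\Phi_\delta(w)\le\Phi_\omega(w)$ for every $w\in{\mathbb{C}}$; integrating against $\varDelta_u$ and using the well-definedness just obtained, $\int I_u{\,{\rm d}}\delta=\int\Phi_\delta{\,{\rm d}}\varDelta_u\le\int\Phi_\omega{\,{\rm d}}\varDelta_u=\int I_u{\,{\rm d}}\omega$. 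Adding the equal polynomial contributions yields $\int u{\,{\rm d}}\delta\le\int u{\,{\rm d}}\omega$, i.e.\ \eqref{udo}, which is exactly the balayage property for the class \eqref{up}.

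I expect the main obstacle to be the Fubini interchange, that is, the joint quasi-integrability of $k_{\lfloor p\rfloor}$ with respect to $\varDelta_u\otimes\mu$; but this is precisely what Proposition~\ref{WA} supplies, its hypotheses being met here (${\sf type}_p[\varDelta_u]<+\infty$ from Proposition~\ref{pr:rep}, and \eqref{imonl} on $\mu$ by assumption), so that difficulty is already absorbed. The remaining work is bookkeeping, the only point of care being to keep all integrals meaningful in $\overline{\mathbb{R}}$ so that the splitting $\int u=\int I_u+\int{\rm Re}\,P$ and the monotonicity $\int\Phi_\delta{\,{\rm d}}\varDelta_u\le\int\Phi_\omega{\,{\rm d}}\varDelta_u$ are justified --- which the bound $\int I_u^+{\,{\rm d}}\mu<+\infty$, together with the finiteness of the polynomial moments, secures.
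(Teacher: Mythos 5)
Your proposal is correct and follows essentially the same route as the paper: decompose $u={\rm Re}\,P+I_u$ via Proposition~\ref{pr:rep}, justify the repeated integrals and the Fubini interchange via Proposition~\ref{WA} applied with $\nu=\varDelta_u$, handle the polynomial part by the moment equalities of Proposition~\ref{pr:bal}[b\ref{b3}], and obtain \eqref{udo} from the pointwise kernel inequality \eqref{kpmtl} of Proposition~\ref{pr2}. Your added care about the finiteness of the polynomial moments, the bound on $\int I_u^+{\,{\rm d}}\mu$, and the observation that Proposition~\ref{WA} only needs ${\sf type}_p[\varDelta_u]<+\infty$ rather than $p={\sf ord}[\varDelta_u]$ makes explicit what the paper leaves implicit, but does not change the argument.
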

\begin{proof} 
 In view of\/  \eqref{imonl}, it follows from  Propositions\/ \ref{WA} and \ref{pr:rep} that  the integration of two summands in the right-hand side  of \eqref{rWA}  with respect to the measure $\omega$ is correct,  and by representation \eqref{rWA}. 
If  $\omega$ is a ${\sf lnmon}_p$-balayage of $\delta$, then 
\begin{multline*}
\int_{{\mathbb{C}}} u{\,{\rm d}} \delta\overset{\eqref{rWA}}{=}\int_{{\mathbb{C}}}
\int_{{\mathbb{C}}} k_{\lfloor p\rfloor} (w,z){\,{\rm d}} \varDelta_u(w){\,{\rm d}} \delta(z)+{\rm Re}  \int_{{\mathbb{C}}} P(z){\,{\rm d}} \delta(z)\\
=\Bigl|\text{Propositions \ref{pr:rep}, \ref{WA} with \eqref{imonl}, \ref{pr:bal}[b\ref{b3}]}\Bigr|
= \int_{{\mathbb{C}}}
\int_{{\mathbb{C}}} k_{\lfloor p\rfloor} (w,z){\,{\rm d}} \delta (z){\,{\rm d}} \varDelta_u(w)+{\rm Re}  \int_{{\mathbb{C}}} P(z){\,{\rm d}} \omega(z)
\\ \overset{\eqref{kpmtl}}{\leq}
\int_{{\mathbb{C}}} k_{\lfloor p\rfloor} (w,z){\,{\rm d}} \omega (z){\,{\rm d}} \varDelta_u(w)+{\rm Re}  \int_{{\mathbb{C}}} P(z){\,{\rm d}} \omega(z),
\end{multline*}
where the right-hand side is equal to  $\int_{\text{\tiny ${\mathbb{C}}$}} u{\,{\rm d}} \omega$ by 
Propositions \ref{WA} with \eqref{imonl}, and \ref{pr:rep}.
\end{proof}

\begin{remark}
The condition  ${\sf type}_{p}[{\sf C}_u^{{\text{\tiny\rm rad}}}]<+\infty$ from \eqref{up} in both Proposition \ref{pr:rep} and Theorem  \ref{th:bu} can be replaced by the condition  ${\sf type}_{p}[{\sf B}_u^{{\text{\tiny\rm rad}}}]\overset{\eqref{df:MCBB}}{<}+\infty$.
\end{remark}

\begin{acknowledgments}
The research  is funded in the framework of executing the development program of Scientific Educational Mathematical Center of 
Volga %%(Privolzhsky)
 Federal District by additional agreement no. 075-02-2020-1421/1 to agreement no. 075-02-2020-1421 (first author), and also was supported by a Grant of the Russian Foundation of Basic Research (Project no.~19-31-90007, second author).
\end{acknowledgments}

{\large \bf Compliance with ethical standards}

\paragraph*{\bf Conflict of interest.}
The authors declare that they have no conflict of interest.

%\References:

\end{document}